\newtheorem {theorem}{Theorem}[section]
\newtheorem {claim}{Claim}[section]
\newtheorem {corollary}{Corollary}[section]
\newtheorem {lemma}{Lemma}[section]
\newtheorem {definition}{Definition}[section]
\newtheorem {remark}{Remark}[section]
\newtheorem {proposition}{Proposition}[section]
\newcommand{\dist}{{\rm dist}}
\newcommand{\reg}{{\rm reg}}
\newcommand{\sing}{{\rm sing}}
\newcommand{\graph}{{\rm graph}}
\newcommand{\grad}{{\rm grad}}
\newcommand{\rank}{{\rm rank}}
\renewcommand{\span}{{\rm span}}
\renewcommand{\Re}{{\rm Re}}
\renewcommand{\Im}{{\rm Im}}
\newcommand{\R}{\Bbb R}
\newcommand{\B}{\Bbb B}
\newcommand{\C}{\Bbb C}
\newcommand{\N}{\Bbb N}
\renewcommand{\P}{\Bbb P}
\newcommand{\Z}{\mathcal Z}
\renewcommand{\S}{\mathcal S}
\title{Thom isotopy theorem for non proper maps and computation of sets of stratified generalized critical values}
\author{S\~i Ti\d{\^e}p \DH inh$^\dagger$}
\address{Institute of Mathematics, Polish Academy of Sciences, \'Sniadeckich 8, 00-656 Warsaw, Poland and
Institute of Mathematics, VAST, 18, Hoang Quoc Viet Road, Cau Giay District 10307, Hanoi, Vietnam}
\email{sdinh@impan.pl or dstiep@math.ac.vn}
\author{Zbigniew Jelonek$^\ddagger$}
\address{Institute of Mathematics, Polish Academy of Sciences, \'Sniadeckich 8, 00-656 Warsaw, Poland}
\email{najelone@cyf-kr.edu.pl}
\thanks{$^\dagger$This author is partially supported by Vietnam National Foundation for Science and Technology Development (NAFOSTED) grant 101.04-2017.12, $^\ddagger$This  author is partially supported by the grant of Narodowe Centrum Nauki, grant number 2015/17/B/ST1/02637}
\subjclass{Primary 32B20; Secondary 14P}
\keywords{}
\date{ \today}
\begin{document}
\maketitle

\begin{abstract} Let $X\subset\C^n$ be an affine variety and $f:X\to\C^m$ be the restriction to $X$ of a polynomial map $\C^n\to\C^m$. In this paper, we construct an affine Whitney stratification of $X$. The set $K(f)$ of stratified generalized critical values  of $f$ can be also computed. We show that $K(f)$ is a nowhere dense  subset of $\C^m$, which contains the set $B(f)$ of bifurcation values  of $f$ by proving a version of the isotopy lemma for non-proper polynomial maps on singular varieties.
\end{abstract}

\pagestyle{plain}

\section{Introduction} 

Ehresmann's fibration theorem \cite{Ehresmann1950} states that a proper smooth surjective submersion $f:X\to N$ between smooth manifolds is a locally trivial fibration. With some extra assumptions, this result has been considered in different contexts. 

Firstly, if we remove the assumption of properness or smoothness, in general, Ehresmann's fibration theorem does not hold since $f$ might have ``local singularities" or ``singularities at infinity". The set of points in $N$ where $f$ fails to be trivial, denoted by $B(f)$, is called the {\bf bifurcation set} of $f$, which is the union of the set $K_0(f)$  of {\bf critical values} and the set $B_\infty(f)$ of {\bf bifurcation values at infinity} of $f$. So far, characterizing $B_\infty(f)$ is still an open problem. In general, people use a bigger set (but easier to describe),  the {\bf set of asymptotic critical values of $f$}, denoted by $K_\infty(f)$, to control $B_\infty(f)$. The set $K_\infty(f)$ is always a nowhere dense subset of $\C^m$ and it is a good aproximation of the set $B_\infty(f).$ For dominant maps on smooth complex affine varieties, the computation of $K_\infty(f)$, and hence of the {\bf set of generalized critical values}, $K(f):=K_0(f)\cup K_\infty(f)$, is given in \cite{Jelonek2005}.

Now if $X$ is singular, we need to partition $X$ into disjoint smooth manifolds and then apply Ehresmann's fibration theorem on each part. However, if we do not require any extra assumption, then the trivialization on the parts may not match. This obstacle can be overcome by introducing the Whitney conditions \cite{Whitney1965-1,Whitney1965-2}. Indeed, if $f$ is proper and $X$ admits a Whitney stratification, then $f$ is locally trivial if it is a submersions on stratas \cite{Thom1969,Mather2012,Verdier1976}. Moreover, if $f$ is non proper and non smooth, we can also define the bifurcation set of $f$ such that $f$ is locally trivial outside $B(f)$. However, so far, to our knowledge, no connection between $B(f)$ and the \textbf{set of stratified generalized critical values of $f$}, defined by $K(f):=\bigcup_{X_\alpha\in\S}K(f|_{X_\alpha})$, for a  Whitney stratification  $\S$ of $X$, has been established. 

Let $X\subset \C^n$ be a singular algebraic set of dimension $n-r$ with $I(X) = \{g_1,\dots,g_{\omega}\}$ and let $f := (f_1,\dots,f_m) : X \to \C^m$ be a polynomial dominant map. Now restricting ourselves to the cases of dominant polynomial maps on singular affine varieties, the main goals of this paper are the following:

\begin{itemize}
\item Construct an affine Whitney stratification $\S$ of $X.$
\item Establish some version of the Thom isotopy lemma for $f$ which yield the inclusion $B(f)\subset \bigcup_{X_\alpha\in\S}K(f|_{X_\alpha})$.
\item Calculate the set of stratified generalized critical values of $f$ given by $K(f):=\bigcup_{X_\alpha\in\S}K(f|_{X_\alpha})$.
\end{itemize}

The paper is organized as follows. In Section \ref{Whitney}, we recall the definitions of Whitney regularity and Whitney stratification, then we construct an affine stratification from a filtration of $X$ by means of some hypersurfaces, and refine it to get an affine Whitney stratification. Some versions of the Thom isotopy lemma for non-proper polynomial maps (Theorem \ref{NonProperIsotopy} and Corollary \ref{NonProperIsotopySubmersion}) will be given in Section \ref{Isotopy}. Then we compute the set of stratified generalized critical values of $f$, which contains the bifurcation values of $f$, where $f := (f_1,\dots,f_m) : X \to \C^m$ is a polynomial dominant map, in the last Sections \ref{StratifiedKf} and \ref{B_i}.

For the remainder of the paper, the differential of $f$ at a point $x$ is identified with its (row) matrix, so we write $d_xf=\Big(\frac{\partial f}{\partial x_1}(x),\dots,\frac{\partial f}{\partial x_n}(x)\Big)$. Let $$\nabla f(x):=\begin{bmatrix} 
\overline{\frac{\partial f}{\partial x_1}(x)}\\
\vdots\\
\overline{\frac{\partial f}{\partial x_n}(x)}
\end{bmatrix},$$
the Hermitian transpose of $d_xf.$ For $v,w\in \C^n,$ denote by $\langle v,w\rangle=\sum_{i=1}^n\overline v_iw_i$ the Hermitian product, and let $v\cdot w=\sum_{i=1}^n v_iw_i$. For the set $A\subset\C^n$, set $\overline A:=\{x:\overline x\in A\}$ and let $\overline A^\Z$ be the Zariski closure of $A.$ For an algebraic variety $X$, the singular part and the regular part of $X$ are denoted respectively by $\sing(X)$ and $\reg(X)$. 

%---------------------------------------------------------------------------------------------------------------------------------

\section{Affine Whitney stratifications}\label{Whitney}
% In this section, we recall some definitions of Whitney regularity and Whitney stratification. We also give an algorithm to calculate an affine Whitney stratification of a given affine variety.

%---------------------------------------------------------------------------------------------------------------------------------

\subsection{Preliminaries}%\label{Preliminaries}

For any two different points $x,y\in\C^n$, define the secant $\overline {xy}$ to be the line passing through the origin which is parallel to the line through $x$ and $y$.

A \textbf{stratification} $\S$ of $X$ is a decomposition of $X$ into a locally finite disjoint union $X = \displaystyle\bigsqcup_{\alpha\in I} X_\alpha$ of non-empty, non-singular, connected, locally closed subvarieties, called strata, such that the boundary $\partial X_\alpha$ of any stratum $X_\alpha$ is a union of strata. If, in addition, for each $\alpha$, the closure $\overline X_\alpha$ and the boundary $\partial X_\alpha:=\overline X_\alpha\setminus X_\alpha$ are affine varieties, then we call $\S$ an \textbf{affine stratification}. It is obvious that any affine stratification is finite.

For  linear subspaces $F,G\subseteq\C^n$, let 
$$\delta(F,G):=\sup_{\substack{x\in F\\ \|x\|=1}}\dist(x,G),$$
where $\dist(x,G)$ is the Hermitian distance. 

Let $(X_\alpha,X_\beta)$ be a pair of strata of $\S$ such that $X_\beta\subset\overline X_\alpha$ and let $x\in X_\beta.$ We recall  some regularity conditions:
\begin{itemize}
\item [(a)] The pair $(X_\alpha,X_\beta)$ is said to be \textbf{(a) Whitney regular at $x\in X_\beta$} if it satisfies the following Whitney condition (a) at $x$: if $x^k\in X_\alpha$ is any sequence such that $x^k\to x$ and $T_{x^k}X_\alpha\to T$, then $T\supset T_{x}X_\beta$. 

\item [(w)] The pair $(X_\alpha,X_\beta)$ is said to be \textbf{(w) regular} at $x\in X_\beta$ (or \textbf{(a) strict Whitney regular at $x$ with exponent $1$}) if it satisfies the following condition (w) at $x$: there exist a neighborhood $U$ of $x$ in $\C^n$ and a constant $c>0$ such that for any $y\in X_\alpha\cap U$ and $x'\in X_\beta\cap U$, we have $\delta(T_{x'}X_\beta,T_yX_\alpha)\leqslant c\|y-x'\|$.

\item [(b)] The pair $(X_\alpha,X_\beta)$ is said to be \textbf{Whitney regular at $x\in X_\beta$} if it satisfies the following Whitney condition (b) at $x$: for any sequence $x^k\in X_\alpha$ and $y^k\in X_\beta,\ y^k\not=x^k,$ such that $x^k\to x,\ y^k\to x,\ T_{x^k}X_\alpha\to T$ and $\overline{x^ky^k}$ converges to a line $\ell$ in the projective space $\P^{n-1}$, we have $\ell \subset T.$
\end{itemize}

We say that the pair $(X_\alpha,X_\beta)$ is \textbf{(a) Whitney regular} (resp. \textbf{Whitney regular}) if it is (a) Whitney regular (resp. Whitney regular) at every point of $X_\beta$.  We say that $\S$ is an \textbf{(a) Whitney stratification} (resp. a \textbf{Whitney stratification}) if any pair of strata $(X_\alpha,X_\beta)$ of $\S$ with $X_\beta\subset\overline X_\alpha$ is (a) Whitney regular (resp. {Whitney regular}). It is well-known that Whitney regularity implies (a) Whitney regularity \cite{Whitney1965-1,Whitney1965-2}. Moreover, in light of \cite{Teissier1982}, the Whitney condition (b) is equivalent to the condition (w) for the category of complex analytic sets, so to check the Whitney regularity, we can verify either the condition (w) or the condition (b).

For the purpose of this paper, we also need the following notion of Whitney (resp. (a) Whitney) regularity along a stratum. Let $X_\beta$ be a stratum of $\S$ and let $x\in X_\beta$. We say that $X_\beta$ is \textbf{Whitney regular} (resp. \textbf{(a) Whitney regular}) at $x$ if for any stratum $X_\alpha$ such that $X_\beta\subset\overline X_\alpha$, the pair $(X_\alpha,X_\beta)$ is Whitney (resp. (a) Whitney) regular at $x$. The stratum $X_\beta$ is \textbf{Whitney regular} (resp. \textbf{(a) Whitney regular}) if it is Whitney (resp. (a) Whitney) regular at every point of $X_\beta$. It is clear that $\S$ is a Whitney (resp. an (a) Whitney) stratification if and only if each stratum of $\S$ is Whitney (resp. (a) Whitney) regular.

%---------------------------------------------------------------------------------------------------------------------------------

\subsection{Construction of affine stratifications}\label{SectionAffine}

Let us, first of all, fix an affine stratification of $X$ whose construction is based on the following proposition.

\begin{proposition}\label{Affine}
Let $X\subset \C^n$ be an affine subvariety of pure codimension $r.$ Assume that $I(X)=\{g_1,\dots,g_\omega\}$, where $\deg g_i\le D.$ Let $W$ be an affine subvariety of positive codimension in $X$ with $I(W)=\{g_1,\dots,g_\omega, u_1,\dots,u_\tau\}$ where $u_i\not\in I(X)$ and $\deg u_i\leqslant D'$. %{\color{red}(I'm not sure we can choose $I(W)$ by adding some generators in $I(X)$)}
Then there exist a polynomial $p_{X,W}$ on $\C^n$ of degree less than  or equal to $r(D-1)+D'$ such that $W\subseteq V(p_{X,W}):=\{x\in\C^n:\ p_{X,W}(x)=0\}$ and $X\setminus V(p_{X,W})$ is a smooth, dense subset of $X.$ Moreover, the polynomial $p_{X,W}$ can be constructed effectively.
\end{proposition}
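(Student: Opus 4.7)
The plan is to build $p_{X,W}$ as a linear combination of products $M_{\sigma,\pi}\,u_j$, where $M_{\sigma,\pi}$ ranges over all $r\times r$ minors of the Jacobian matrix of $r$-tuples drawn from $g_1,\dots,g_\omega$, indexed by $\sigma=(i_1,\dots,i_r)\subset\{1,\dots,\omega\}$ and $\pi=(k_1,\dots,k_r)\subset\{1,\dots,n\}$. Since the partial derivatives of each $g_i$ have degree at most $D-1$, a minor $M_{\sigma,\pi}$ has degree at most $r(D-1)$, and hence each product $M_{\sigma,\pi}u_j$ has degree at most $r(D-1)+D'$, matching the claimed bound.

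First, I would invoke the Jacobian criterion: because $X$ has pure codimension $r$, a point $x\in X$ is smooth if and only if $\rank J_x(g_1,\dots,g_\omega)=r$, so $\sing(X)$ coincides with the common zero locus (in $X$) of all $r\times r$ minors $M_{\sigma,\pi}$. Defining
\[
p_{X,W}\;:=\;\sum_{\sigma,\pi,j} c_{\sigma,\pi,j}\, M_{\sigma,\pi}\, u_j
\]
for scalars $c_{\sigma,\pi,j}$ to be chosen, one gets $W\subseteq V(p_{X,W})$ because every summand carries a factor $u_j$ with $u_j|_W=0$, and $\sing(X)\subseteq V(p_{X,W})$ because every summand carries a factor $M_{\sigma,\pi}$. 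Thus smoothness of $X\setminus V(p_{X,W})$ is automatic.

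The crux is the density claim: $p_{X,W}$ must not vanish identically on any irreducible component of $X$. Fix such a component $X'$. Since $W$ has positive codimension in the pure-dimensional variety $X$, we have $W\cap X'\subsetneq X'$, and likewise $\sing(X)\cap X'\subsetneq X'$, so some $x_0\in X'\setminus(\sing(X)\cup W)$ exists. At $x_0$ some $M_{\sigma,\pi}(x_0)\neq 0$ by the Jacobian criterion, and some $u_j(x_0)\neq 0$ because $x_0\notin W$, so at least one product $M_{\sigma,\pi}(x_0)u_j(x_0)$ is nonzero. Consequently the evaluation $c\mapsto p_{X,W}(x_0)$ is a nontrivial linear form on the coefficient space, so the set of ``bad'' $c$'s for $X'$ is a proper hyperplane. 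As there are only finitely many irreducible components of $X$, the union of bad hyperplanes is a proper subset of the coefficient space, and any $c$ outside it yields a $p_{X,W}$ with the desired density property.

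For effectiveness one can compute, via standard Gr\"obner-basis methods, a witness point $x_0$ in each irreducible component of $X$ outside $\sing(X)\cup W$, and then select integer coefficients $c_{\sigma,\pi,j}$ avoiding the finitely many explicit linear conditions $p_{X,W}(x_0)\neq 0$; concretely, one may set $c_{\sigma,\pi,j}=N^{e(\sigma,\pi,j)}$ for a large integer $N$ and a fixed enumeration $e$, and verify the finitely many non-vanishing tests. The main obstacle is this density step: ensuring non-degeneracy on \emph{every} component of $X$ requires the genericity argument above, and preserving effectiveness forces one to turn the genericity into an algorithmic, finite certificate, which is why one needs explicit access to the irreducible decomposition of $X$.
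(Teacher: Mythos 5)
Your proof is correct, but it takes a genuinely different route from the paper's. You build $p_{X,W}$ as a generic linear combination $\sum c_{\sigma,\pi,j}\,M_{\sigma,\pi}\,u_j$ over \emph{all} $r\times r$ minors $M_{\sigma,\pi}$ of the Jacobian of the generators of $I(X)$, and over all generators $u_j$ of $I(W)$ modulo $I(X)$; you then argue non-vanishing on each component $X'$ by exhibiting a witness point $x_0 \in X' \setminus (\sing(X)\cup W)$ and choosing $c$ outside finitely many hyperplanes. The paper instead first replaces $g_1,\dots,g_\omega$ by $r$ generic linear combinations $G_1,\dots,G_r$ (so $X$ becomes an irreducible component of the complete intersection $V(G_1,\dots,G_r)$), appends $n-r$ generic linear forms $l_1,\dots,l_{n-r}$ to get a single $n\times n$ determinant $|Jac(G_1,\dots,G_r,l_1,\dots,l_{n-r})|$, and separately forms a single generic combination $H=\sum\gamma_i u_i$, setting $p_{X,W}$ to be the product. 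Both approaches give the same degree bound $r(D-1)+D'$ and both rely on genericity. The paper's version is computationally leaner: it produces one explicit $n\times n$ determinant rather than a sum over $\binom{\omega}{r}\binom{n}{r}\tau$ products, and its effectiveness algorithm certifies the generic choices by dimension tests on ideals at infinity rather than by locating witness points in each irreducible component, which your sketch of effectiveness implicitly requires and which is a more demanding primitive than Gr\"obner-basis dimension computation. On the other hand, your argument is conceptually cleaner in one respect: it applies the Jacobian criterion directly to $I(X)$ without the intermediate step of passing to the complete intersection $Z=V(G_1,\dots,G_r)\supsetneq X$, which in the paper requires an implicit observation that the random $G_i$ still detect smoothness of $X$. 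One small point to tighten: when $W=\emptyset$ (which is used later with $p_{X,\emptyset}$), your sum is empty; you should, as the paper does, replace the factor $u_j$ by the constant $1$ in that case.
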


\begin{proof}
Let $X=\bigcup^m_{i=1} X_i$, where $X_i$ are irreducible (hence $r$-codimensional) components of $X.$ Take  sufficiently generic (random) 
numbers $\alpha_{ij}\in \C$, $i=1,\dots,r$, $j=1,\dots,\omega$ and set
$$G_i=\sum_{j=1}^\omega \alpha_{ij}g_j,\ i=1,\dots,r.$$ 
Note that the set $Z:=V(G_1,\dots,G_r)$ has pure codimension $r$ and $X\subset Z.$ Let $\gamma_1,\dots,\gamma_\tau$ be some (random) generic numbers and set 
$$H:=\begin{cases}1 \ \ \ \ \ \ \ \ \ \ \ \text{if }\ W=\emptyset,\\
\sum_{i=1}^\tau \gamma_iu_{i} \ \text{otherwise}.
\end{cases}$$
Clearly $\dim \big(X\cap V(H)\big)<\dim X.$ Moreover, for a sufficiently general linear $r$-dimensional subspace $L^{r}\subset \C^n$ the intersection $L^r\cap Z$ has only isolated smooth points and $L^r\cap X_i \not=\emptyset$ for every $i=1,\dots,m.$ We can assume that $L^r$ is determined by the linear forms $l_i=\sum_{j=1}^n \beta_{ij} x_j,$ $i=1,\dots,n-r$, where  $\beta_{ij}$ sufficiently generic (random) 
numbers. Now take 
$$p_{X,W}=|Jac(G_1,\dots,G_r, l_1,\dots,l_{n-r})|\cdot H,$$
where $Jac(.)$ denotes the Jacobian matrix. Then $p_{X,W}$ is a polynomial with the required properties.

The polynomial $p_{X,W}$ can be find by using a probabilistic algorithm. First recall the following.

\begin{theorem}[\cite{grebner}] Let $I$
be an ideal in $k[x_1,\dots,x_n]$ and let $G=\{g_1,\dots,g_s\}$ be a Gröbner basis for $I$ with respect to a graded monomial order in 
$k[x_1,\dots,x_n].$ Then $G^h=\{g^h_1,\dots,g^h_s\}$ is a basis for $I^h\subset k[x_0,x_1,\dots,x_n].$
\end{theorem}

This theorem allows us to compute the set of points at infinity of an affine variety given by the ideal $I$, to this aim it is enough to compute the Groebner basis $\{g_1,\dots,g_s\}$ of the ideal $I$ and then to consider the ideal $I_\infty=\{x_0, g^h_1,\dots,g^h_s\}.$ Now we sketch the algorithm to compute the polynomial $p_{X,W}$. Note that for a given ideal $I$ we can compute dim $V(I)$ by \cite{dimension}.

\medskip

INPUT: The ideal $I=I(X)=\{g_1,\dots,g_\omega\}$ and the ideal $J=I(W)=\{g_1,\dots,g_\omega,u_1,\dots,u_\tau\}$

1) {\bf repeat }

choose random numbers $\alpha_{i1},\dots,\alpha_{i\omega}$, $i=1,\dots,r$;

put $G_i:=\sum^\omega_{k=1} \alpha_{ik} g_k$, $i=1,\dots,r$;

put $I=\{ G_1,\dots, G_r\}$;

compute the ideal $I_\infty=\{H_1,\dots,H_m\}\subset k[x_0,\dots,x_n]$

{\bf until}  $\dim V(I_\infty)=n-r$.

2) {\bf repeat }

choose random numbers $\beta_{i1},\dots,\beta_{in}$, $i=1,\dots,n-r$;

put $l_i:=\sum^n_{k=1} \beta_{ik} x_k$, $i=1,\dots,n-r$;

put $I=\{ G_1,\dots, G_r,l_1,\dots,l_{n-r}\}$;

compute the ideal $I_\infty=\{H_1,\dots,H_m\}\subset k[x_0,\dots,x_n]$;

{\bf if} dim $V(I_\infty)=0$ {\bf then}

{\bf begin}

compute $V(G_1,\dots,G_r, l_1,\dots,l_r):=\{a_1,\dots,a_p\}$

{\bf end} 

{\bf until}  $\dim V(I_\infty)=0$ and $|Jac(G_1,\dots,G_r,l_1,\dots,l_{n-r})(a_i)|\not=0$ for $i=1,\dots p.$

3)  {\bf repeat }

choose random numbers $\gamma_{1},\dots,\gamma_{\tau}$;

put $H:=\sum^\tau_{k=1} \gamma_{i} u_k$ ;

put $J=\{ G_1,\dots, G_r, H\}$;

compute the ideal $J_\infty\subset k[x_0,\dots,x_n]$

{\bf until}  $\dim V(J_\infty)<n-r$.

\vspace{3mm}

OUTPUT:  $p_{X,W}=|Jac(G_1,\dots,G_r,l_1,\dots,l_{n-r})|\cdot H$
\end{proof}

\begin{remark}\label{uwaga}
{\rm Let us assume that $I(X)$ and $I(W)$ are generated by polynomials from the ring $\Bbb F[x_1,\dots,x_n]$, where $\Bbb F$ is a subfield of $\C.$
Then we can choose a polynomial $p_{X,W}$ in this way that $p_{X,W}\in \Bbb F[x_1,\dots,x_n].$}
\end{remark}

Thus with no loss of generality, we can assume that $\rank Jac(g_1,\dots,g_r) = r$ on some non-empty regular open subset $X^0$ of $X$ and that $X=\overline{ X^0}$. 
It is clear that $V(p_{X,W})$ contains $\sing(X)\cup W$ and the singular points  of the projection  $(l_1,\dots,l_{n-r}): X\to \C^{n-r}.$   
Now to construct an affine stratification of $X$, it is enough to construct an affine filtration 
$X=X_0\supset X_1\supset\dots\supset X_{n-r}\supset X_{n-r+1}=\emptyset$ by induction with $X_{i+1}:=X_i\cap V(p_{X_i,\emptyset}),\ i=0,\dots,n-r$. The degree of each $X_i$ can be calculated and depends only on $D$.

%---------------------------------------------------------------------------------------------------------------------------------

\subsection{Construction of affine Whitney stratifications}\label{SectionWhitneyb}

In this section, we will construct an affine Whitney stratification of a given affine variety $X$, with $I(X)=\{g_1,\dots,g_\omega\}$ and $\deg g_i\le D$, by refining the affine stratification given in Subsection \ref{SectionAffine} so that the resulting stratification is still affine and moreover satisfies the Whitney condition.

First of all, inspired by the construction in \cite{Flores2016,Teissier1982}, let us describe the Whitney condition (b) algebraically. Assume that $Y\subset X$ is an affine subvariety of $X$ of dimension $n-p$ with $\dim Y<\dim X$ defined by 
$$Y:=X\cap\{\widetilde g_{r+1}=\dots=\widetilde g_{p}=0\}.$$
Set
$$\Gamma_1:=\left\{\begin{array}{lcll}
(x,y,w,v,\gamma,\lambda)\in\C^n\times\C^n\times\C^n\times\C^n\times\C\times \C^r: \\ 
 g_1(x)=\dots=g_r(x)=0\\
 g_1(y)=\dots=g_r(y)=\widetilde g_{r+1}(y)=\dots=\widetilde g_{p}(y)=0\\
 w=\gamma(x-y)\\
 v=\sum_{i=1}^r\lambda_id_x g_i\\
\end{array}\right\},$$ 
and let 
$$\pi_1:\C^n\times\C^n\times\C^n\times\C^n\times\C\times \C^r\to \C^n\times\C^n\times\C^n\times\C^n$$ be the projection on the first $4n$ coordinates. Let $C(X,Y)=\overline{\pi_1(\Gamma_1)}^\Z\subset (X\times Y\times\C^n\times\C^n)$, where the closure is taken in the Zariski topology. Of course, $C(X,Y)$ is an affine variety. We have the following.
\begin{lemma}\label{NormalTangent} For each $(x,x,w,v)\in C(X,Y)$, there are sequences $x^k\in X^0,\ y^k\in Y,\ \gamma^k\in\C$ and $\lambda^k\in \C^r$ such that 
\begin{itemize}
\item $x^k\to x$,
\item $y^k\to x$,
\item $w^k:=\gamma^k(x^k-y^k)\to w$,
\item $v^k:=\sum_{i=1}^r\lambda^k_id_{x^k} g_i\to v.$
\end{itemize}
\end{lemma}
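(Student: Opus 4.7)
The plan is to exploit two classical facts: Chevalley's theorem (the image of an algebraic variety under a polynomial map is constructible) and the fact that the Zariski closure of a constructible subset of $\C^N$ coincides with its Euclidean closure. Together these let me pass from the abstract Zariski closure defining $C(X,Y)$ to a genuinely convergent sequence.

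First I would observe that $\Gamma_1\subset\C^{4n+1+r}$ is an algebraic set and $\pi_1$ is a linear projection, so $\pi_1(\Gamma_1)\subset\C^{4n}$ is constructible. Its Zariski closure $C(X,Y)=\overline{\pi_1(\Gamma_1)}^{\Z}$ therefore coincides with its closure in the Euclidean topology of $\C^{4n}$. Given the point $(x,x,w,v)\in C(X,Y)$, I can consequently choose a sequence $(a^k,b^k,\omega^k,\nu^k)\in\pi_1(\Gamma_1)$ with $a^k\to x$, $b^k\to x$, $\omega^k\to w$, $\nu^k\to v$. Lifting each member of the sequence along $\pi_1$, the defining equations of $\Gamma_1$ supply $\gamma^k\in\C$ and $\lambda^k\in\C^r$ satisfying $b^k\in Y$, $\omega^k=\gamma^k(a^k-b^k)$, and $\nu^k=\sum_{i=1}^r\lambda^k_i\,d_{a^k}g_i$. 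Setting $x^k:=a^k$ and $y^k:=b^k$ immediately yields the four convergences $x^k\to x$, $y^k\to x$, $w^k\to w$, $v^k\to v$ required by the lemma.

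The only genuine obstacle is arranging that the $x^k$ actually land in the regular open subset $X^0$, rather than merely in $V(g_1,\dots,g_r)$. To handle this, I would consider the Zariski-locally-closed subset
$$\Gamma_1^0:=\Gamma_1\cap\bigl(X^0\times\C^{3n+1+r}\bigr)\subset\Gamma_1.$$
Since $X=\overline{X^0}$ and $C(X,Y)$ lives inside $X\times Y\times\C^n\times\C^n$ by construction, only the components of $\Gamma_1$ whose $x$-coordinate dominates a component of $X$ contribute to $C(X,Y)$; on each such component $X^0$ furnishes a Zariski dense open set of admissible $x$-values. A routine fibrewise density argument then shows that $\pi_1(\Gamma_1^0)$ is Zariski dense, and hence by constructibility also Euclidean dense, in the relevant part of $\pi_1(\Gamma_1)$. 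Replacing each approximating point $(a^k,b^k,\omega^k,\nu^k)$ by one in $\pi_1(\Gamma_1^0)$ within Euclidean distance $1/k$ permits the assumption $x^k=a^k\in X^0$, and the proof is complete.

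The main difficulty is precisely this last density step: it is where the potential discrepancy between $X$ and $V(g_1,\dots,g_r)$ — that is, the possible presence of extraneous components in the zero set of the partial generating system $g_1,\dots,g_r$ — has to be neutralised, and it forces one to work component by component inside $\Gamma_1$ rather than with $\Gamma_1$ as a whole.
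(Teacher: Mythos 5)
Your first half is essentially the paper's implicit first sentence made explicit: Chevalley's theorem plus the equality of Zariski and Euclidean closure for constructible sets over $\C$ gives a sequence $(a^k,b^k,\omega^k,\nu^k)\in\pi_1(\Gamma_1)$ converging to $(x,x,w,v)$, and lifting along $\pi_1$ furnishes $\gamma^k,\lambda^k$. This part is fine.

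The second half, however, diverges genuinely from the paper and contains a gap. The paper never claims, nor needs, that $\pi_1(\Gamma_1^0)$ is dense in (the relevant part of) $\pi_1(\Gamma_1)$. Instead, given the sequence $\bar x^k\in X$ produced in the first step, it \emph{perturbs each term individually} to a nearby $x^k\in X^0$ (possible because $X=\overline{X^0}$), with the perturbation size chosen as a function of $\gamma^k$ and $\lambda^k$: specifically $\|x^k-\bar x^k\|\leqslant\|\bar x^k-y^k\|/k$ and $\|d_{x^k}g_i-d_{\bar x^k}g_i\|<1/(k|\lambda^k_i|)$, after a case split on whether $\bar x^k=y^k$ and whether $\lambda^k_i=0$. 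The point is that $\gamma^k$ and $\lambda^k$ can blow up, so a perturbation of $x^k$ of the naive size $1/k$ can destroy the convergences of $w^k$ and $v^k$; the quantitative bounds are what make the argument work. This is a purely local, pointwise construction that requires no structural analysis of $\Gamma_1$.

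Your route instead asserts a \emph{global} density statement, and its justification is where the gap sits. The inference ``since $C(X,Y)$ lives inside $X\times Y\times\C^n\times\C^n$, only the components of $\Gamma_1$ whose $x$-coordinate dominates a component of $X$ contribute to $C(X,Y)$'' is not a valid deduction: a priori a component of $\Gamma_1$ could project to a proper subvariety of some component of $X$ (for instance into $\sing X$), be entirely missed by $\Gamma_1^0$, and still contribute a piece of $C(X,Y)$ not approximable from $\pi_1(\Gamma_1^0)$. What rescues the claim is a structural fact you never state: since $w$ and $v$ are determined by $(x,y,\gamma,\lambda)$, the variety $\Gamma_1$ is the graph of a morphism over $V(g_1,\dots,g_r)\times Y'\times\C\times\C^r$, so every irreducible component of $\Gamma_1$ has $x$-image a full irreducible component of $V(g_1,\dots,g_r)$, hence (after restricting to those lying in $X$) a full component of $X$, on which $X^0$ is indeed dense. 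Only with that observation does $\Gamma_1^0$ become Zariski dense in the relevant part of $\Gamma_1$, and only then does ``image of a dense open is dense in the image'' plus constructibility yield the Euclidean density you use. As written, the ``routine fibrewise density argument'' is doing exactly the work that needs to be done and is not carried out; you should either supply the graph-structure observation or fall back on the paper's direct quantitative perturbation.
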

\begin{proof} %The proof is similar to the proof of Lemma \ref{NormalConeProperty}. 
By construction, there are sequences $\bar x^k\in X,\ y^k\in Y,\ \bar\gamma^k\in \C$ and $\lambda^k\in \C^r$ such that $\bar x^k,y^k\to x,\ \bar w^k:=\bar\gamma^k(\bar x^k-y^k)\to w$ and $\sum_{i=1}^r\lambda^k_id_{\bar x^k}g_i\to v.$ It is clear that by taking subsequences if necessary, we may suppose that: 
\begin{itemize}
\item either $\bar x^k=y^k$ for every $k$ or $\bar x^k\not=y^k$ for every $k$,
\item for each $i$, either $\lambda^k_i\not=0$ for every $k$ or $\lambda^k_i=0$ for every $k$.
\end{itemize}
Set
$$\gamma^k=\begin{cases}0 \ \text{if }\ \bar x^k=y^k \text{ for every } k,\\
\bar\gamma^k \ \text{if }\ \bar x^k\not=y^k \text{ for every } k.
\end{cases}$$

Suppose that $\lambda^k_i\not=0$ for $i=1,\dots,r'\leqslant r,\ k\in \N$ and $\lambda^k_i=0$ for $i=r'+1,\dots,r,\ k\in \N$. Since $\bar x^k\in \overline X^0$, there exists a sequence $x^k\in X^0$ such that 
$$\|x^k-\bar x^k\|\leqslant\begin{cases}\frac{1}{k} \ \text{if }\ \bar x^k=y^k \text{ for every } k,\\
\frac{\|\bar x^k-y^k\|}{k} \ \text{if }\ \bar x^k\not=y^k \text{ for every } k,
\end{cases}$$ 
so $x^k\to x$. By continuity, we can also choose  $x^k$ so that 
$\|d_{x^k} g_i-d_{\bar x^k}g_i\|<\frac{1}{k\lambda^k_i}$ if $\lambda^k_i\not=0.$ Set $v^k:=\sum_{i=1}^r\lambda^k_id_{x^k}g_i$. Then
$$\begin{array}{llll}
\big\|v^k-\sum_{i=1}^r\lambda^k_id_{\bar x^k}g_i\big\|&=&\big\|\sum_{i=1}^{r'}\lambda^k_i\big(d_{x^k}g_i-d_{\bar x^k}g_i\big)\big\|\\
&\leqslant&\sum_{i=1}^{r'}|\lambda^k_i|\big\|d_{x^k}g_i-d_{\bar x^k}g_i\big\|<\frac{r'}{k}\to 0,\\
\end{array}$$
i.e., $v^k\to v.$ Set $w^k:=\gamma^k(x^k-y^k)$. Now if $\bar x^k=y^k$ for every $k$, then $\gamma^k=0$ and $w=\bar w^k=0$, so we have $w^k=0=w$. If $\bar x^k\not=y^k$ for every $k$, then
$$\|w^k-\bar w^k\|=|\gamma^k|\cdot\|(x^k-\bar x^k)\|\leqslant|\gamma^k|\cdot\frac{\|\bar x^k-y^k\|}{k}=\frac{\|\bar w^k\|}{k}\to 0.$$
Hence $w^k\to w.$ The lemma is proved.
\end{proof}

The following algebraic criterion permits us to check the Whitney regularity on $Y^0=Y\setminus V(p_{Y,W})$, where the notation $V(p_{Y,W})$ is from Proposition \ref{Affine}, and the affine set $W$ will be determined later.
\begin{lemma}\label{Criterionb1} Let $x\in Y^0.$ Then the pair $(X^0,Y^0)$ satisfies the Whitney condition (b) at $x$ if and only if 
for any $(x,x,w,v)\in C(X,Y)$, we have $ v\cdot w=0.$
\end{lemma}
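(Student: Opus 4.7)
The plan is to use the standard description of the tangent space at a smooth point of $X$ as the bilinear annihilator of the gradient span: for each $x^k \in X^0$, set $N^*_{x^k} := \operatorname{span}\{d_{x^k}g_1,\dots,d_{x^k}g_r\}$. Since $\operatorname{rank}\operatorname{Jac}(g_1,\dots,g_r) = r$ on $X^0$, this is an $r$-dimensional subspace of $(\C^n)^*$ and $T_{x^k}X^0 = \{u \in \C^n : v \cdot u = 0 \text{ for all } v \in N^*_{x^k}\}$. By compactness of the Grassmannians and this bilinear duality, passing to a subsequence, $T_{x^k}X^0 \to T$ if and only if $N^*_{x^k} \to N^*$ for some $r$-dimensional $N^*$, and then $T = \{u : v\cdot u = 0 \text{ for all } v \in N^*\}$ and $N^* = \{v : v\cdot u = 0 \text{ for all }u \in T\}$. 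This dictionary is what converts the algebraic condition into the geometric one.

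For the forward direction, assume $(X^0,Y^0)$ satisfies (b) at $x$ and take $(x,x,w,v) \in C(X,Y)$. The case $w = 0$ is trivial. Otherwise, Lemma \ref{NormalTangent} gives sequences $x^k \in X^0$, $y^k \in Y$, $\gamma^k \in \C$ and $\lambda^k \in \C^r$ with the four convergences. Since $Y^0$ is dense in $Y$ and $x \in Y^0$, I perturb $y^k$ to $\widetilde y^k \in Y^0$ with $|\gamma^k|\cdot \|\widetilde y^k - y^k\| \to 0$, so that $\widetilde y^k \to x$, $\gamma^k(x^k - \widetilde y^k) \to w$, and (for $k$ large) $\widetilde y^k \neq x^k$; the secant $\overline{x^k\widetilde y^k}$ then converges in $\P^{n-1}$ to the line spanned by $w$. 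After passing to a subsequence, $T_{x^k}X^0 \to T$, so Whitney (b) yields $w \in T$. Choose $u^k \in T_{x^k}X^0$ with $u^k \to w$. Since $v^k := \sum_i \lambda^k_i d_{x^k}g_i \in N^*_{x^k}$ annihilates $T_{x^k}X^0$ bilinearly, $v^k \cdot u^k = 0$ for all $k$, and continuity of $\cdot$ gives $v \cdot w = 0$.

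For the backward direction, assume the algebraic condition and take sequences $x^k \in X^0$, $y^k \in Y^0$, $y^k \ne x^k$, with $x^k, y^k \to x$, $T_{x^k}X^0 \to T$, and $\overline{x^k y^k} \to \ell$ in $\P^{n-1}$. Fix a nonzero direction vector $w$ of $\ell$; by rescaling, there exist $\gamma^k \in \C$ with $\gamma^k(x^k - y^k) \to w$. After a further subsequence, $N^*_{x^k} \to N^*$ with $T = (N^*)^{\circ}$ (bilinear annihilator). For any $v \in N^*$, choose a basis of $N^*$ as a limit of bases of $N^*_{x^k}$ and express $v$ in that basis to obtain $\lambda^k \in \C^r$ with $v^k := \sum_i \lambda^k_i d_{x^k}g_i \to v$. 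Then $(x^k, y^k, \gamma^k(x^k - y^k), v^k) \in \pi_1(\Gamma_1)$, and passing to the Euclidean limit gives $(x,x,w,v) \in C(X,Y)$ (since Euclidean closure is contained in Zariski closure). By hypothesis $v \cdot w = 0$. As $v$ was arbitrary in $N^*$, this places $w \in (N^*)^{\circ} = T$, so $\ell \subset T$ and Whitney (b) holds at $x$.

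The main delicacy is the bilinear Grassmannian duality between $T_{x^k}X^0$ and $N^*_{x^k}$ that guarantees one limit exists precisely when the other does, with the limit relation $T = (N^*)^\circ$; the uniform dimension $r$ coming from $\operatorname{rank}\operatorname{Jac}(g_1,\dots,g_r) = r$ on $X^0$ is essential to realize every element of $N^*$ as a limit $\sum_i \lambda^k_i d_{x^k}g_i$, which is what makes the algebraic condition faithfully test membership $w \in T$.
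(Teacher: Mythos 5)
Your proof is correct and follows essentially the same strategy as the paper's: invoke Lemma~\ref{NormalTangent}, read the Whitney condition (b) through the duality between $T_{x^k}X^0$ and the conormal span $\operatorname{span}\{d_{x^k}g_1,\dots,d_{x^k}g_r\}$, and pass to limits in the Grassmannian. The paper phrases both directions as arguments by contradiction and realizes the duality via Hermitian-orthonormal bases $\{b_1^k,\dots,b_r^k\}$ and conjugation, whereas you argue directly with the bilinear pairing; this is a cleaner bookkeeping of the same idea, not a different route.
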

\begin{proof} Suppose that $(X^0,Y^0)$ is Whitney regular at $x$ and assume for contradiction that there is $(x,x,w,v)\in C(X,Y)$ such that $v\cdot w\not=0.$ In view of Lemma \ref{NormalTangent}, there are  sequences $x^k\in X^0,\ y^k\in Y,\ \gamma^k\in\C$ and $\lambda^k\in \C^r$ such that 
\begin{itemize}
\item $x^k\to x$, $y^k\to x$,
\item $w^k:=\gamma^k(x^k-y^k)\to w$,
\item $v^k:=\sum_{i=1}^r\lambda^k_id_{x^k} g_i\to v.$
\end{itemize}
Note that $w\not=0$, so $w$ determines the limit of the sequence of secants $\overline{x^ky^k}$ and it follows that $x^k\not=y^k$ for $k$ large enough. By taking a subsequence if necessary, we may assume that $T_{x^k}X^0\to T.$ By assumption, $w\in T.$
For each $k$, let $\{b_1^k,\dots,b_r^k\}$ be an orthonormal basis of $N_{x^k}X^0$; recall that $N_{x^k}X^0:=\span\{d_{x^k}g_1,\dots,d_{x^k}g_r\}$ is the conormal space of $X^0$ at $x^k.$ Obviously $\langle\overline{b_1^k},\dots,\overline{b_r^k}\rangle^\perp={T_{x^k}X^0}$. By compactness, each sequence $b^k_i$ has an accumulation point $b_i$. Without loss of generality, suppose that $b_i^k\to b_i.$ It is clear that the system $\{b_1,\dots,b_r\}$ is also orthonormal and $\langle\overline b_1,\dots,\overline b_r\rangle^\perp=T.$ Let $\widetilde\lambda^k=(\widetilde\lambda^k_1,\dots,\widetilde\lambda^k_r)$ be such that $v^k:=\sum_{i=1}^r\widetilde\lambda^k_i b_i^k.$ Then $\widetilde\lambda^k$ is convergent to a limit $\widetilde\lambda$ and it is clear that $v=\sum_{i=1}^r\widetilde\lambda_i b_i.$ Finally, we have $w\in T=\langle\overline{b_1},\dots,\overline{b_r}\rangle^\perp\subset\langle\overline v\rangle^\perp,$ i.e., $v\cdot w=0$, which is a contradiction.

Now suppose that $v\cdot w=0$ for any $(x,x,w,v)\in C(X,Y)$ and assume,  that $(X^0,Y^0)$ is not Whitney regular at $x$. So there are  sequences $x^k\in X^0$ and $y^k\in Y^0$ with the following properties:
\begin{itemize}
\item $x^k\not=y^k,\ x^k\to x,\ y^k\to y$;
\item $T_{x^k}X^0\to T$;
\item the sequence of secants $\overline{x^ky^k}$ tends to a line $\ell\not\subset T$.
\end{itemize}
For each $k$, let $\{b_1^k,\dots,b_r^k\}$ be an orthonormal basis of $N_{x^k}X^0$ so $\langle\overline{b_1^k},\dots,\overline{b_r^k}\rangle^\perp={T_{x^k}X^0}$. As above, we may assume that $b_i^k\to b_i.$ Then the system $\{b_1,\dots,b_r\}$ is also orthonormal and $\langle\overline{b_1},\dots,
\overline{b_r}\rangle^\perp=T.$ Let $w^k:=\frac{x^k-y^k}{\|x^k-y^k\|}$; we can assume that the limit $w:=\lim w^k$ exists and clearly $w$ is a direction vector of $\ell$. By assumption, $w\not\in T=\langle\overline b_1,\dots,\overline b_r\rangle^\perp,$ i.e., there exists an index $j$ such that $b_j\cdot w\not=0.$ To get a contradiction, it is enough to show that there is a sequence $v^k:=\sum_{i=1}^r\lambda^k_id_{x^k} g_i$ such that $v^k\to b_j$, but this is clear since $b_j\in\span\{d_{x^k}g_1,\dots,d_{x^k}g_r\}$ so such a sequence always exists. The lemma is proved.
\end{proof}

Now according to \cite{Krick1991,Eisenbud1992,Greuel2008,radical}, it is possible to calculate a basis for the ideal $I(\Gamma_1)$ by calculating the radical  of the following ideal in $\C[x,y,w,v,\gamma,\lambda]$:
$$\left(\begin{array}{lcll}
 g_1(x)=\dots=g_r(x)=0\\
 g_1(y)=\dots=g_r(y)=\widetilde g_{r+1}(y)=\dots=\widetilde g_{p}(y)=0\\
 w=\gamma(x-y)\\
 v=\sum_{i=1}^r\lambda_id_x g_i\\
\end{array}\right).$$ 
Then by Buchberger's algorithm, we can calculate a Gr\"obner basis of $I(\Gamma_1)$. So in view of \cite[Theorem 5.1]{Jelonek2005}, \cite{Pauer1988}, we can compute a Gr\"obner basis of the ideal $I\big(C(X,Y)\big)$. Now we give another criterion for Whitney regularity.
\begin{lemma}\label{Criterionb2} Let $\{h_1(x,y,w,v),\dots,h_q(x,y,w,v)\}$ be a Gr\"obner basis of $I\big(C(X,Y)\big)$ and set
$$\Gamma_2:=\left\{\begin{matrix}
(x,x,w,v,\gamma,\lambda)\in\C^n\times\C^n\times\C^n\times\C^n\times\C\times\C:\\
h_1(x,x,w,v)=\dots=h_q(x,x,w,v)=0\\
\gamma\sum_{j=1}^nv_jw_j=1\\
\lambda p_{Y,\emptyset}(x)=1
\end{matrix}\right\},$$
where $p_{Y,\emptyset}(x)$ is the polynomial determined in Proposition \ref{Affine}.
Then the pair $(X^0,Y^0)$ is not Whitney regular at $x$ if and only if there exists $(w,v,\gamma,\lambda)\in\C^n\times\C^n\times\C\times\C$ such that $(x,x,w,v,\gamma,\lambda)\in\Gamma_2$.
\end{lemma}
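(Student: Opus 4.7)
The plan is to reduce the statement to Lemma \ref{Criterionb1} and then encode the two inequalities ``$v\cdot w\neq 0$'' and ``$x\in Y^0$'' as polynomial equations by means of the Rabinowitsch trick, which is exactly the role of the auxiliary variables $\gamma$ and $\lambda$.

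First I would unwind the defining conditions of $\Gamma_2$. Since $\{h_1,\dots,h_q\}$ is a Gr\"obner basis of the radical ideal $I(C(X,Y))$, the vanishing $h_1(x,x,w,v)=\cdots=h_q(x,x,w,v)=0$ is equivalent, by Hilbert's Nullstellensatz, to the membership $(x,x,w,v)\in C(X,Y)$. Next, the existence of $\gamma\in\C$ with $\gamma\sum_{j=1}^n v_jw_j=1$ is equivalent to $v\cdot w\neq 0$; similarly, the existence of $\lambda\in\C$ with $\lambda p_{Y,\emptyset}(x)=1$ is equivalent to $p_{Y,\emptyset}(x)\neq 0$, which by Proposition \ref{Affine} is exactly the condition $x\in Y^0$. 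Thus the existence of a tuple $(w,v,\gamma,\lambda)$ with $(x,x,w,v,\gamma,\lambda)\in\Gamma_2$ is equivalent to the existence of $(x,x,w,v)\in C(X,Y)$ with $v\cdot w\neq 0$ and the membership $x\in Y^0$.

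With this dictionary in place, both implications become essentially immediate from Lemma \ref{Criterionb1}. For the forward direction, if $(X^0,Y^0)$ is not Whitney regular at $x\in Y^0$, then Lemma \ref{Criterionb1} supplies $(x,x,w,v)\in C(X,Y)$ with $v\cdot w\neq 0$; setting $\gamma:=1/(v\cdot w)$ and $\lambda:=1/p_{Y,\emptyset}(x)$ produces a point in $\Gamma_2$. Conversely, if such a tuple exists, then by the unwinding above we simultaneously obtain $(x,x,w,v)\in C(X,Y)$, the inequality $v\cdot w\neq 0$, and the membership $x\in Y^0$; Lemma \ref{Criterionb1} then forces $(X^0,Y^0)$ to fail Whitney regularity at $x$.

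I do not anticipate a serious obstacle: the substance lies in Lemma \ref{Criterionb1} and in the explicit determination of $I(C(X,Y))$ discussed just above the statement, while Lemma \ref{Criterionb2} merely packages these into a single polynomial system suitable for symbolic computation. The only small point to be careful about is that $p_{Y,\emptyset}$ really does cut out a proper subvariety of $Y$ whose complement is the smooth locus $Y^0$; this is guaranteed by Proposition \ref{Affine}, so no additional argument is needed.
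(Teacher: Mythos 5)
Your proof is correct and takes essentially the same approach as the paper: the paper's (much terser) proof likewise observes that the conditions $\lambda p_{Y,\emptyset}(x)=1$ and $\gamma\sum v_j w_j=1$ encode $x\in Y^0$ and $v\cdot w\neq 0$ respectively, and then invokes Lemma \ref{Criterionb1}. Your explicit unwinding via the Rabinowitsch trick is simply a more detailed write-up of the same argument.
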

\begin{proof} Note that $x\in Y^0$ if and only if $p_{Y,\emptyset}(x)\not=0$, i.e., there exists $\lambda\in\C$ such that $\lambda p_{Y,\emptyset}(x)=1$. In view of Lemma \ref{Criterionb1}, the pair $(X^0,Y^0)$ is not Whitney regular at $x$ if and only if there exist $w,v$ with $v\cdot w\not=0$ such that $(x,x,w,v)\in C(X,Y).$ The lemma follows easily.
\end{proof}

Now we determine an algebraic set $W=W(X,Y)$ in $Y$ with $\dim W<\dim Y$ and $V(p_{Y,\emptyset})\subset W$ such that the pair $(X^0,Y\setminus W)$ is  Whitney regular. Let $$\pi_2:\C^n\times\C^n\times\C^n\times\C^n\times\C\times\C\to \C^n$$ 
be the projection on the first $n$ coordinates. By Lemma \ref{Criterionb2}, $\pi_2(\Gamma_2)$ is the set of points where the Whitney condition (b) fails to be satisfied. By construction, $\overline{\pi_2(\Gamma_2)}^\Z$ is affine, where $\overline{\pi_2(\Gamma_2)}^\Z$ is the Zariski closure of $\pi_2(\Gamma_2)$. It follows from \cite{Whitney1965-1,Whitney1965-2} that $\dim \pi_2(\Gamma_2)<\dim Y$, so $\dim \overline{\pi_2(\Gamma_2)}^\Z<\dim Y$. Set 
$$W=W(X,Y):=\overline{\pi_2(\Gamma_2)}^\Z;$$ 
then obviously $\dim W<\dim Y$. Again, applying \cite{Krick1991,Eisenbud1992,Greuel2008}, \cite[Theorem 5.1]{Jelonek2005}, \cite{Pauer1988}, we can compute a Gr\"obner basis of the ideal $I(W).$ %By convention, set $V=V(X,X,Z)=\emptyset.$

Finally, let 
\begin{itemize}
\item $X_0:=X,$ 
\item $X_1:=X_0\cap V(p_{X_0,\emptyset}),$
\item $X_2:=X_1\cap V(p_{X_1,W(X_0,X_1)}),$ 
\item $X_3:=X_2\cap V(p_{X_2,W(X_0,X_2)\cup W(X_1,X_2)}),$ $\dots,$
\item $X_i:=X_{i-1}\cap V(p_{X_{i-1},\bigcup_{j=0}^{i-2} W(X_j,X_{i-1})}),$ $\dots$
\end{itemize}
By induction, we can construct a finite filtration of algebraic sets $X=X_0\supset X_1\supset\dots\supset X_{n-r}\supset X_{n-r+1}=\emptyset$ with $\dim X_i>\dim X_{i+1}$. Let $B_i:=X_i\setminus X_{i+1}.$ Then $\S:=\{B_i\}_{i=1,\dots,q}$ is a Whitney stratification of $X$. Note that the degree of $X_i$ can be determined explicitly and depends only on $D.$

%---------------------------------------------------------------------------------------------------------------------------------

\section{Thom isotopy lemma for non-proper maps}\label{Isotopy}

We start this section with:

\begin{definition}
Let $f:X\to\C^m$ be a polynomial dominant map where $X$ is an algebraic set. Let  $\S=\{X_\alpha\}_{\alpha\in I}$ be  a stratification of $X.$ 
By $K_\infty(f|_{X_\alpha})$ we mean the set $\{ y\in\C^m: {\rm there \ is \ a \ sequence} \ x_n\to \infty; \ x_n\in X_\alpha: ||x_n||\nu(d_{x_n}(f|_{X_\alpha}))\to 0\ {\rm and} \ f(x_n)\to y\}$ (here $\nu$ denotes the Rabier function, for details see \cite{Jelonek2005}). 
Now let $C(f,X_\alpha)$ denote the set  of points where $f|_{X_\alpha}$ is not a submersion. By $\sing(f,\S)$ the set of stratified singular values of $f$, i.e.,
\begin{equation}\label{K0f}\sing(f,\S)=\bigcup_{\alpha\in I}K_0(f,X_\alpha), \end{equation}
where $K_0(f,X_\alpha)=\overline{f(C(f,X_\alpha))}.$ 
\end{definition}

By [Theorem 3.3, \cite{Jelonek2005}] we have that for every $\alpha$ the set  $K_\infty(f|_{X_\alpha})$ has measure $0$ in $\C^m.$
In particular the set $K(f)$ defined below has also measure $0.$

\begin{definition}
 Let $K(f)=K(f,\S)$ be the set of stratified generalized critical values of $f$ given by
\begin{equation}\label{Kf} K(f):=\bigcup_{\substack{\alpha\in I}} (K_0(f|_{X_\alpha})\cup K_\infty(f|_{X_\alpha})) \end{equation}
\end{definition}

Assume that $\S$ is an affine Whitney stratification of $X$, we prove that $K(f)$ contains the set of bifurcation values of $f$.
\begin{theorem}[First isotopy lemma for non-proper maps]\label{NonProperIsotopy} Let $X\subset\C^n$ be an affine variety with an affine Whitney stratification $\S$ and let $f:X\to\C^m$ be a polynomial dominant map. Let $K(f)$ be the set of stratified generalized critical values of $f$ given by  (\ref{Kf}). Then $f$ is locally trivial outside $K(f)$.
\end{theorem}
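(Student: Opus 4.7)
The plan is to run the classical Thom--Mather argument for the first isotopy lemma, but with a careful choice of stratified lift controlled by the Rabier function so that integral curves do not escape to infinity in finite time. Fix $y_0\in\C^m\setminus K(f)$. Since $y_0\notin K_0(f|_{X_\alpha})$ for each stratum $X_\alpha$, closedness of $\overline{f(C(f,X_\alpha))}$ gives a neighborhood $U$ of $y_0$ on which $f|_{X_\alpha}$ is a submersion on $X_\alpha\cap f^{-1}(U)$ for every $\alpha$. Since $y_0\notin K_\infty(f|_{X_\alpha})$, after shrinking $U$ and fixing $R>0$, there exists $\varepsilon>0$ such that
$$\|x\|\,\nu\bigl(d_x(f|_{X_\alpha})\bigr)\ \geqslant\ \varepsilon$$
for every $x\in X_\alpha\cap f^{-1}(U)$ with $\|x\|\geqslant R$. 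This is the key quantitative input beyond the classical proper case.

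Next, for each coordinate direction $e_j$ on $\C^m$ (thinking of $\C^m\cong\R^{2m}$), I would construct a stratified vector field $\xi_j$ on $f^{-1}(U)$ that lifts $e_j$, is tangent to every stratum, and is compatible across strata. On each stratum $X_\alpha$, Rabier's theorem supplies a lift $\xi_j^\alpha$ with
$$\|\xi_j^\alpha(x)\|\ \leqslant\ \frac{\|e_j\|}{\nu(d_x(f|_{X_\alpha}))},$$
and I would glue these lifts together, stratum by stratum from the lowest-dimensional to the highest, by extending each $\xi_j^\alpha$ to a neighborhood in adjacent higher-dimensional strata using a controlled tube system \`a la Mather. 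The Whitney (b) condition (equivalently (w) on the complex-analytic side, as recalled in Section \ref{Whitney}) guarantees that this extension can be made rugose/controlled, so that the resulting vector field is continuous, stratum-preserving, and still lifts $e_j$.

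Combining the Rabier bound with the inequality $\|x\|\,\nu(d_x(f|_{X_\alpha}))\geqslant\varepsilon$ for $\|x\|\geqslant R$, and boundedness of $\nu^{-1}$ on the compact set $\{\|x\|\leqslant R\}\cap f^{-1}(\overline{U'})$ (for $U'\Subset U$), I get a global estimate of the form
$$\|\xi_j(x)\|\ \leqslant\ C(1+\|x\|)\qquad \text{on }f^{-1}(U').$$
A standard Gronwall argument then shows that the integral curves of each $\xi_j$ exist for all time and remain in $f^{-1}(U')$ as long as the image under $f$ stays in $U'$; in particular no trajectory escapes to infinity in finite time, which is the obstruction that usually requires properness. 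The flows of $\xi_1,\dots,\xi_{2m}$ commute because they project to the commuting coordinate flows on $U'$, and by construction they preserve strata. Composing them and starting from the fiber $f^{-1}(y_0)$ produces a stratum-preserving homeomorphism $f^{-1}(U')\cong f^{-1}(y_0)\times U'$ over $U'$, which is the desired local triviality.

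The main obstacle I foresee is not the submersion bookkeeping on a single stratum---that is essentially Rabier's theorem plus the definitions---but the compatible gluing of the lifts $\xi_j^\alpha$ across adjacent strata in a way that simultaneously preserves the Rabier-type norm estimate needed for no-escape-to-infinity. The Whitney (a) part suffices to extend a tangent vector field continuously from $X_\beta$ into $X_\alpha\supset X_\beta$, but one must additionally verify that the extension can be chosen so that the bound $\|\xi_j\|\lesssim 1/\nu$ is not destroyed, since $\nu$ can degenerate along $\partial X_\alpha\cap X_\beta$. This is where the stronger Whitney (b)/(w) regularity really earns its keep, via Verdier's framework of rugose vector fields, and this is the step that will require the most care in the full proof.
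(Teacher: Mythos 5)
Your proposal follows essentially the same route as the paper: horizontal lifts bounded via the Rabier function on each stratum (Lemma \ref{Bounded}), a Verdier-style rugose gluing by induction on stratum dimension using Whitney (b)/(w) to keep the norm estimate under control (Lemma \ref{Compare} and the bump-function construction), and a Gronwall-type bound $\|\xi_j(x)\|\lesssim 1+\|x\|$ to rule out escape to infinity (Lemma \ref{Integrable}). You also correctly identify the delicate point, namely that the gluing must preserve the $1/\nu$-type bound, which is exactly what the paper's Lemma \ref{Compare} plus the cutoff-function estimate deliver.

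One step as written is incorrect: you claim the flows of $\xi_1,\dots,\xi_{2m}$ commute because they lift the commuting coordinate fields on $\R^{2m}$. That inference fails; $[\xi_i,\xi_j]$ only pushes forward to $[\partial_i,\partial_j]=0$, so it is $f$-vertical but not necessarily zero, and lifted flows generally do not commute. This is harmless for the construction: as in the paper, one simply composes the flows in a fixed order, $h(x,t_1,\dots,t_{2m})=h_{2m}(\dots h_1(x,t_1)\dots,t_{2m})$, and the inverse gives the trivializing homeomorphism without any commutativity. Replacing your commutativity remark by this ordered composition makes the sketch match the paper's argument.
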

Before proving Theorem \ref{NonProperIsotopy}, recall that the Whitney condition (b) is equivalent to the condition (w) (see \cite[V.1.2]{Teissier1982}), so it is more convenient to use the condition (w) since we will need to construct rugose vector fields in the sense of \cite{Verdier1976}. In what follows, it is more convenient to work with the underlying real algebraic set of $X$ in $\R^{2n}$, denoted also by $X$; the affine Whitney stratification $\S$ of $X$ induces a semialgebraic Whitney stratification of the underlying set with the corresponding strata denoted by the same notations $X_\beta.$ We also identify the polynomial map $f$ with the real polynomial map $(\Re f_1,\dots,\Re f_m,\Im f_1,\dots,\Im f_m):X\to\R^{2m}.$ 

Let us recall the definitions pertaining to rugosity. Let $\varphi:X\to\R$ be a real function. We say that $\varphi$ is a \textbf{rugose function} if the following conditions are fulfilled:
\begin{itemize}
\item The restriction $\varphi|_{X_\beta}$ to any stratum $X_\beta$ is a smooth function.
\item For any stratum $X_\beta$ and for any $x\in X_\beta$, there exist a neighborhood $U$ of $x$ in $\C^{2n}$ and a constant $c>0$ such that for any $y\in X\cap U$ and $x'\in X_\beta\cap U$, we have $|\varphi(y)-\varphi(x')|\leqslant c\|y-x'\|$.
\end{itemize}
A \textbf{rugose map} is a map whose components are rugose functions. A vector field $v$ on $X$ is called a \textbf{rugose vector field} if $v$ is a rugose map and $v(x)$ is tangent to the stratum containing $x$ for any $x\in X$.

\begin{proof}[Proof of Theorem \ref{NonProperIsotopy}] 
Let $z\in\C^m\setminus K(f)$ where we identify $\C^m$ with $\R^{2m}$ and let $B$ be an open box centered at $z$ such that $\overline B\cap K(f)=\emptyset.$ To prove the theorem, it is enough to prove that $f$ is trivial on $B$. Without loss of generality, we may suppose that $z=0$ and $B=(-1,1)^{2m}$. Let $\partial_1,\dots,\partial_{2m}$ be the restrictions of the coordinate vector fields (on $\R^{2m}$) to $\overline B$. Set $U:=f^{-1}(\overline B),\ U_\beta=U\cap X_\beta$ and
$$I':=\{\beta\in I:\ X_\beta\cap U\not=\emptyset\}.$$ 
First of all, let us give a sufficient condition for trivializing a rugose vector field.
\begin{lemma}\label{Integrable} For $i=1,\dots,2m$, let $v_i$ be  vector fields on $X$ which are rugose in a neighborhood of $U$. Assume that $df(v_i)=\partial_i$ and there is a positive constant $c>0$ such that  $\|v_i(x)\|\leqslant\frac{\|x\|+1}{c}$ for any $x\in U$. Then $f$ is a topological trivial fibration over $\overline B.$
\end{lemma}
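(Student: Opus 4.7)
The strategy is the standard one: integrate each $v_i$ to a continuous, stratum-preserving flow $\phi_i^t$, then compose these flows to build a trivialization $F_0\times\overline B\to f^{-1}(\overline B)$, where $F_0:=f^{-1}(0)$. The hypotheses split naturally: rugosity ensures that the flows exist and are continuous across strata, while the linear growth bound ensures that the integral curves exist for all time required by $\overline B$.

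First, since each $v_i$ is a rugose vector field on the Whitney stratified set $X$ and is tangent to the strata, Verdier's integration theorem for rugose vector fields (\cite{Verdier1976}) produces a continuous local one-parameter group $\phi_i^t$ whose restriction to each stratum $X_\beta$ is smooth and which preserves the stratification. Because $df(v_i)=\partial_i$, along any integral curve $\gamma(t)=\phi_i^t(x)$ one has $f(\gamma(t))=f(x)+te_i$; in particular the curve stays in $U=f^{-1}(\overline B)$ for all $t$ in the (bounded) interval $J_i(x):=\{t\in\R:f(x)+te_i\in\overline B\}$.

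Second, I would show that $\phi_i^t(x)$ is defined on the whole of $J_i(x)$. The only possible obstruction to extending a solution is escape to infinity. But on $U$, the growth hypothesis $\|v_i(y)\|\leqslant (\|y\|+1)/c$ combined with Gronwall's inequality gives
$$\|\gamma(t)\|+1\leqslant (\|x\|+1)\,e^{|t|/c},$$
so the integral curve remains in a compact subset of $\C^n$ for every bounded $t$-interval. Hence the maximal interval of existence of $\gamma$ inside $U$ is exactly $J_i(x)$, and $\phi_i^t$ is continuous on $\{(x,t):t\in J_i(x)\}$.

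Third, I would assemble the trivialization. Define
$$\Phi:F_0\times\overline B\longrightarrow f^{-1}(\overline B),\qquad \Phi\bigl(x,(t_1,\dots,t_{2m})\bigr):=\phi_{2m}^{t_{2m}}\circ\cdots\circ\phi_1^{t_1}(x).$$
Since $f(x)=0$ and the flows shift $f$ by the corresponding coordinate, $f\circ\Phi$ equals the projection onto $\overline B$. Continuity of $\Phi$ is immediate from the joint continuity of each $\phi_i^t$. The inverse is obtained by running the flows backward in reversed order,
$$\Phi^{-1}(y)=\Bigl(\bigl(\phi_1^{-t_1}\circ\cdots\circ\phi_{2m}^{-t_{2m}}\bigr)(y),\,f(y)\Bigr),\qquad (t_1,\dots,t_{2m})=f(y),$$
which is continuous for the same reasons. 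Hence $\Phi$ is a stratum-preserving homeomorphism trivializing $f$ over $\overline B$, which proves the lemma.

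The main obstacle is the second step: rugosity by itself does not preclude finite-time blow-up inside a single stratum, so the linear growth hypothesis on $\|v_i\|$ (which in later applications will be extracted from the defining properties of $K_\infty(f|_{X_\alpha})$ and the Rabier function) is essential in order to run Gronwall and guarantee that each flow $\phi_i^t$ sweeps out the entire box.
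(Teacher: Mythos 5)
Your proposal is correct and follows essentially the same route as the paper: invoke Verdier's theory of rugose vector fields to get stratum-preserving flows $\phi_i^t$, use the linear growth bound plus Gronwall to rule out finite-time escape to infinity on $U$, and then compose the one-parameter flows coordinate-by-coordinate to produce the trivializing homeomorphism. The paper organizes the composition through auxiliary slices $Y^i_t$ and maps $h_i$, but that is only a notational difference from your direct formula for $\Phi$.
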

\begin{proof} It is enough to prove that there is a homeomorphism $\phi:f^{-1}(\overline B)\to f^{-1}(0)\times\overline B$ such that the following diagram commutes:
$$\begin{array}{rcl}
f^{-1}(\overline B)&\stackrel{\phi}{\longrightarrow}& f^{-1}(0)\times\overline B\\
f\searrow &&\swarrow \pi\\
&\overline B&
\end{array}$$
where $\pi$ denotes the projection on the second factor. We note the following facts:
\begin{itemize}
\item [(i)] The flow of $v_i$ preserves the stratification. %, i.e., the closure of each integral curve of $v_i$ is contained in only one stratum. 
This is a consequence of  rugosity.
\item [(ii)] For each $i$ and  any  $x\in U$, there is a unique integral curve of $v_i$ passing through $x$ (see \cite{Verdier1976}).
\end{itemize}
Set $Y^i_t:=(y_1,\dots,y_{i-1},t,y_{i+1},\dots,y_n)$ and $Y^i=\{Y^i_t:\ t\in[-1,1]\}.$
First of all, we will prove that the flow of $v_i$ induces a homeomorphism $\phi_i:f^{-1}(Y^i)\to f^{-1}(Y^i_0)\times[-1,1]$ such that the following diagram commutes:
$$\begin{array}{rcl}
f^{-1}(Y^i)&\stackrel{\phi_i}{\longrightarrow}& f^{-1}(Y^i_0)\times[-1,1]\\
p_i\circ f\searrow &&\swarrow \pi_i\\
&[-1,1]&
\end{array}$$
where $\pi$ denotes the projection on the second factor and $p_i$ denotes the projection on the $i^\text{th}$ coordinate. This follows from the following claim which states that there is no trajectory of $v_i$ going to infinity.
\begin{claim} For each $x\in f^{-1}(Y^i_0)$, let $\gamma$ be the integral curve of $v_i$ such that $\gamma(0)=x$. Then $\gamma$ reaches any level $f^{-1}(Y^i_t)$ at  time $t$ for $t\in[-1,1]$.
\end{claim}
\begin{proof} By assumption, $\|\dot{\gamma}(t)\|\leqslant\frac{\|\gamma(t)\|+1}{c}$. Without loss of generality, suppose that $t>0$. In light of the Gronwall Lemma, by repating the calculation of \cite[Theorem 3.5]{DAcunto2005}, we get
\begin{eqnarray*}
\|\gamma(t)\| & \leqslant & \|\gamma(0)\|+\int_0^t\frac{\|\gamma(s)\|+1}{c}ds \\
& = & \|x\|+\frac{t}{c}+\int_0^t\frac{\|\gamma(s)\|}{c}ds \\
& \leqslant & \Big(\|x\|+\frac{t}{c}\Big)\exp\int_0^t\frac{ds}{c}=\Big(\|x\|+\frac{t}{c}\Big)e^\frac{t}{c}<+\infty,
\end{eqnarray*}
which implies that the trajectory $\gamma$ does not go to infinity at  time $t$. The claim follows.
\end{proof}
For any $x\in f^{-1}(Y^i_{0})$, let $h_i(x,t)=x+\int_0^t\stackrel{.}{\gamma}(s)ds$. Then $h_i$ defines a homeomorphism $f^{-1}(Y^i_0)\times[-1,1]\to f^{-1}(Y^i)$. Then $\phi_i=h_i^{-1}$ is the required homeomorphism. 

Now for $x\in f^{-1}(0)$, let
$h:f^{-1}(0)\times\overline B\to f^{-1}(\overline B)$ be defined by
$$h(x,t_1,\dots,t_{2m})=h_{2m}(\dots(h_2(h_1(x,t_1),t_2),\dots,t_{2m}).$$
Then $\phi:=h^{-1}$ is a homeomorphism, as required. The lemma is proved.
\end{proof}

For each $\beta\in I'$, it is clear that $f|_{X_\beta}$ is a submersion on $(f|_{X_\beta})^{-1}(\overline B)$, so for $x\in U_\beta,$ the differential $d_x(f|_{X_\beta}):T_x X_\beta\to\R^{2m}$ is surjective, which induces an isomorphism of vector spaces 
$$\widetilde{d_x}(f|_{X_\beta}):T_x X_\beta/\ker d_x(f|_{X_\beta})\cong\R^{2m}.$$
Thus for each $i=1,\dots,2m$, the vector field $\partial_i$ can be lifted uniquely and smoothly on each stratum $X_\beta$ with $\beta\in I'$, to the vector field called the horizontal lift of $\partial_i$ and denoted  by $v_i^\beta.$  Clearly, $v_i^\beta(x)$ is the unique vector in $T_x X_\beta$ which  lifts  $\partial_i$ and is orthogonal to $\ker d_x(f|_{X_\beta})$. Each $v_i^\beta$ has the following important properties.
\begin{lemma}\label{Bounded} Let $c>0$ be such that $(\|x\|+1)\nu\big(d_x(f|_{X_\beta})\big)\geqslant c$ for any $\beta\in I'$ and any $x\in U_\beta$; recall that $\nu$ is the Rabier function \cite{Rabier1997}. For each $x\in X_\beta$ with $\beta\in I'$, we have 
$$\|v_i^\beta(x)\|\leqslant\frac{\|x\|+1}{c}.$$
\end{lemma}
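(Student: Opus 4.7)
The plan is to reduce the desired bound to the defining property of the Rabier function applied pointwise at $x$. By construction, $v_i^\beta(x)$ is the unique vector in the orthogonal complement $(\ker d_x(f|_{X_\beta}))^\perp \subset T_x X_\beta$ whose image under $d_x(f|_{X_\beta})$ is the unit vector $\partial_i \in \R^{2m}$. Writing $L := d_x(f|_{X_\beta})$ and letting $\widetilde L \colon (\ker L)^\perp \to \R^{2m}$ denote the induced linear isomorphism, this characterization reads $v_i^\beta(x) = \widetilde L^{-1}(\partial_i)$, so that $\|v_i^\beta(x)\| \leqslant \|\widetilde L^{-1}\|$ since $\|\partial_i\| = 1$.

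Next, I will invoke the standard identity $\nu(L) = \|\widetilde L^{-1}\|^{-1}$ valid for any surjective linear map $L$, as set up in \cite{Rabier1997} and used systematically in \cite{Jelonek2005}: the Rabier function equals the smallest singular value of $L$, i.e.\ the reciprocal of the operator norm of its minimal right inverse. Substituting into the previous inequality gives
$$\|v_i^\beta(x)\| \leqslant \frac{1}{\nu(d_x(f|_{X_\beta}))}.$$
The hypothesis of the lemma then rewrites as $1/\nu(d_x(f|_{X_\beta})) \leqslant (\|x\|+1)/c$ for $x \in U_\beta$ and $\beta \in I'$, and combining this with the preceding bound yields exactly the stated estimate.

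I do not anticipate a substantive obstacle: once the Rabier identification $\nu(L) = \|\widetilde L^{-1}\|^{-1}$ is in hand, the argument is immediate. The only conceptual check worth flagging is that, after passing to the underlying real structure as in the paragraph preceding the lemma, the Rabier function used here still agrees with the one appearing in the definition of $K_\infty(f|_{X_\beta})$ that sits inside the assumption $\overline B \cap K(f) = \emptyset$; this is standard because the smallest singular value of a surjective complex linear map coincides with that of the associated real linear map on the underlying real inner product spaces, so the bound transfers without loss.
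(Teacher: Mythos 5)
Your proposal is correct and takes essentially the same route as the paper: both reduce the bound to $\|v_i^\beta(x)\| \leqslant 1/\nu\big(d_x(f|_{X_\beta})\big)$ and then apply the hypothesis. The only difference is cosmetic: the paper proves the identity $\nu(L) = \|\widetilde L^{-1}\|^{-1}$ in place via a short argument about semiaxes of the ellipsoids $L(\B_\beta)$ and $\widetilde L^{-1}(\B)$, whereas you cite it directly as the standard singular-value characterization of the Rabier function.
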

\begin{proof} Let $\B_\beta$ be the closed unit ball centered at the origin in $T_x X_\beta$. Then $d_x(f|_{X_\beta})(\B_\beta)$ is an ellipsoid in $\R^{2m}$ with $\nu\big(d_x(f|_{X_\beta})\big)$ as the length of shortest semiaxis. Let $\B$ be the closed unit ball centered at the origin in $\R^{2m}$. Then $\big(\widetilde d_x(f|_{X_\beta})\big)^{-1}\Big({\nu\big(d_x(f|_{X_\beta})\big)}\B\Big)$ is an ellipsoid in $T_x X_\beta$ with $1$ as the lenght of longest semiaxis. Therefore the longest semiaxis of the ellipsoid $\big(\widetilde d_x(f|_{X_\beta})\big)^{-1}(\B)$ is $1/{\nu\big(d_x(f|_{X_\beta})\big)}$. Consequently,
$$\|v_i^\beta(x)\|\leqslant\frac{1}{\nu\big(d_x(f|_{X_\beta})\big)}\leqslant\frac{\|x\|+1}{c},$$
which yields the lemma.
\end{proof}

\begin{lemma}\label{Compare} Let $x\in U$, let $X_\beta$ be the stratum containing $x$ and let $X_\alpha$ be a stratum such that $X_\beta\subset\overline X_\alpha$. Then there exists an open neighborhood $W$ of radius not greater than $1$ of $x$ such that for all $y\in W\cap X_\alpha$, we have %and  all $x'\in W\cap X_\beta$ we have 
%\begin{equation}\label{Eq1}\|v_i^\beta(x')\|< 2\|v_i^\beta(x)\| \end{equation}
$$\|v_i^\alpha(y)\|< 2\|v_i^\beta(x)\|,$$
for $i=1,\dots,2m.$
\end{lemma}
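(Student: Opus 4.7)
The plan is to bound $\|v_i^\alpha(y)\|$ from above by constructing an explicit lift $w \in T_y X_\alpha$ of $\partial_i$ with $\|w\| < 2\|v_i^\beta(x)\|$, and then to use the defining property of $v_i^\alpha(y)$ as the orthogonal (hence minimum-norm) element of $(d_y f)^{-1}(\partial_i) \cap T_y X_\alpha$: since $v_i^\alpha(y) \perp \ker d_y(f|_{X_\alpha})$, every such lift $w$ satisfies $\|v_i^\alpha(y)\| \leqslant \|w\|$.

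First I would invoke the (a) Whitney regularity of the pair $(X_\alpha, X_\beta)$ at $x$. Combined with the compactness of the Grassmannian, this implies $\delta(T_x X_\beta, T_y X_\alpha) \to 0$ as $y \to x$ within $X_\alpha$. Consequently, for $y$ in a sufficiently small neighborhood of $x$, one can choose $\tilde v \in T_y X_\alpha$ with $\|\tilde v - v_i^\beta(x)\|$ as small as desired relative to $\|v_i^\beta(x)\|$.

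Next I would estimate the defect $e := \partial_i - d_y f(\tilde v) = (d_x f - d_y f)(v_i^\beta(x)) + d_y f\bigl(v_i^\beta(x) - \tilde v\bigr)$. Since $f$ is polynomial, $\|d_x f - d_y f\|$ tends to zero as $y \to x$, and the Whitney approximation makes the second term small as well, so $\|e\|$ can be made arbitrarily small relative to $\|v_i^\beta(x)\|$. Taking $u \in T_y X_\alpha$ to be the horizontal lift of $e$ under $d_y(f|_{X_\alpha})$ yields $w := \tilde v + u$ with $d_y f(w) = \partial_i$. To bound $\|u\|$, I would use that $\overline B \cap K(f) = \emptyset$ together with the closedness of $K(f)$: by continuity of the Rabier function $\nu$ on the submersive locus and the inclusion $f(x) \in \overline B \subset \C^m \setminus K(f)$, a uniform Rabier estimate $(\|y\|+1)\nu\bigl(d_y(f|_{X_\alpha})\bigr) \geqslant c'$ persists on a whole neighborhood of $x$ in $\C^{2n}$, so $\|u\| \leqslant \|e\|(\|y\|+1)/c'$.

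Putting the estimates together gives $\|w\| \leqslant \|v_i^\beta(x)\|(1 + o(1))$ as $y \to x$; by shrinking the neighborhood (and intersecting over the finitely many indices $i = 1,\dots,2m$, while also imposing radius $\leqslant 1$), one obtains $\|w\| < 2\|v_i^\beta(x)\|$, and hence $\|v_i^\alpha(y)\| \leqslant \|w\| < 2\|v_i^\beta(x)\|$. The main technical point is making precise the quantitative comparison between the two tangent spaces; here even the stronger (w) condition is not needed, because the target inequality is strict, so any vanishing rate of $\delta(T_x X_\beta, T_y X_\alpha)$ combined with the uniform Rabier lower bound suffices.
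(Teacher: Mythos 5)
Your proof is correct, and it rests on the same two pillars the paper's proof does: (a) Whitney regularity to compare $T_y X_\alpha$ with $T_x X_\beta$ as $y\to x$, and the minimum-norm characterization of the horizontal lift $v_i^\alpha(y)$ (the paper packages the latter as its Claim 2.1). The implementation, however, is genuinely different in flavor. The paper argues by contradiction: it takes a sequence $x^k\to x$ in $X_\alpha$ along which the inequality would fail, passes to a subsequence with $T_{x^k}X_\alpha\to T\supset T_xX_\beta$, and then invokes two claims — that the horizontal lifts $v_i^\alpha(x^k)$ converge to the horizontal lift for $d_xf|_T$ (its Claim 2.2, stated without proof), and that the minimum-norm preimage over a larger subspace is no longer than over a smaller one (its Claim 2.1) — to derive $\|v_i^\beta(x)\|\ge 2\|v_i^\beta(x)\|$, a contradiction. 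Your route is direct: you manufacture an explicit competitor lift $w=\tilde v+u$ in $T_yX_\alpha$ by first approximating $v_i^\beta(x)$ inside $T_yX_\alpha$ (via the $\delta$-estimate from (a)-regularity) and then correcting the small defect $e=\partial_i-d_yf(\tilde v)$ with a horizontal lift $u$, whose size you control through the Rabier lower bound; minimality then gives $\|v_i^\alpha(y)\|\le\|w\|$. This buys you something concrete: your argument replaces the paper's unproved convergence Claim 2.2 with an elementary quantitative estimate, and makes visible the role of the Rabier bound (which the paper's Claim 2.2 also secretly needs, since without a lower bound on $\nu$ the horizontal lifts need not converge). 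One small technical point worth stating carefully: the Rabier lower bound from the surrounding proof is stated only on $U=f^{-1}(\overline B)$, while you need it for $y$ in a full neighborhood of $x$ in $X_\alpha$, where $f(y)$ may slip just outside $\overline B$; your appeal to closedness of $K(f)$ and a slightly enlarged box handles this, but since closedness of $K_\infty$ is only established later (Theorem \ref{glowne}), it is cleaner to argue via continuity of $\nu$ and the fact that $\nu\big(d_xf|_T\big)>0$ for every limit $T$ of $T_{y^k}X_\alpha$ (using (a)-regularity again), which gives the needed local lower bound without invoking closedness of $K(f)$.
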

\begin{proof} %The existence of such a neighborhood $W$ such that (\ref{Eq1}) and (\ref{Eq2}) hold is trivial, so it is enough to prove that we can shrink $W$ so that (\ref{Eq3}) holds. 
Assume for contradiction that there are an index $i_0$, a stratum $X_{\alpha_0}$ and a sequence $x^k\in X_{\alpha_0}$ such that $x^k\to 0$ and 
 $\|v_i^{\alpha_0}(x^k)\|\geqslant 2\|v_i^\beta(x)\|$. Taking a subsequence if necessary, we may assume that $T_{x^k}X_{\alpha_0}\to T$. Since the stratification is Whitney regular, it is (a) Whitney regular, which yields $T\supset T_xX_\beta$. The following claims are straightforward.
\begin{claim}\label{Claim1} Let $L:H\to\R^m$ be a surjective linear map and let $\widetilde L:H/\ker L\cong \R^m$ be the induced linear isomorphism. Let $H'\subset H$ be a linear subspace and assume that $L|_{H'}$ is also surjective. Then for any $w\in\R^m$, we have 
$$\|(\widetilde {L})^{-1}(v)\|\leqslant\|(\widetilde {L|_{H'}})^{-1}(v)\|.$$
\end{claim}
\begin{claim}\label{Claim2} The sequence $v_i^{\alpha_0}(x^k)$ is convergent.
\end{claim}
By Claim \ref{Claim2}, let $w_i:=\lim_{k\to\infty}v_i^{\alpha_0}(x^k)$. Then it is clear that $\|w_i\|\geqslant 2\|v_i^\beta(x)\|$ and $w_i=(\widetilde{d_x}f|_T)^{-1}(\partial_i)$ where $\widetilde{d_x}f|_T$ is given by the linear isomorphism $T/\ker (d_xf|_T)\cong\R^{2m}$. Since $T\supset T_xX_\beta$, it follows from Claim \ref{Claim1} that 
$$\|w_i\|\leqslant\|(\widetilde{d_x}f|_{T_xX_\beta})^{-1}(\partial_i)\|=\|v_i^\beta(x)\|.$$
This is a contradiction, which ends the proof of the lemma.
\end{proof}

Note that, for fixed $i$, the vector field on $U$ which coincides with $v_i^\beta$ on each $U_\beta$ is not necessarily a smooth vector field. In what follows, we will try to deform these fields to produce a rugose vector field in the sense of \cite{Verdier1976}, which satisfies the assumption of Lemma \ref{Integrable}. The process is carried out by induction on dimension.

For $2m\leqslant d\leqslant 2\dim_\C X$, let $I'_d:=\{\beta\in I':2m\leqslant\dim X_\beta\leqslant d\},\ B_d:=\bigcup_{i\in I'_{d}}X_\beta$ and $U_d=B_d\cap U$. By induction on $d$, we construct a rugose vector field on a neighborhood of $U_{2\dim_\C X}$ in $X$ with the  property of Lemma \ref{Integrable}. For $d=2m$, let $v_i^{2m}$ be the restriction to an open neighborhood of $U_{2m}$ in $X$ of the smooth vector field on $B_{2m}$ which coincides with each $v_i^\beta$ on $X_\beta$ for $\beta\in I'_{2m}$. Then $v_i^{2m}$ is clearly rugose, $df(v_i^{2m})=\partial_i$ and by Lemma \ref{Bounded}, $\|v_i^{2m}(x)\|\leqslant\frac{\|x\|+1}{c}$ for any $x\in U_{2m}$.

For each $i$, assume that we have constructed a rugose vector field, denoted by $v_i^d$, on a neighborhood $\widetilde U_d$ of $U_d$ in $B_d$ such that $d_xf\big(v_i^{d}(x)\big)=\partial_i$ and  $\|v_i^d(x)\|\leqslant\frac{\|x\|+1}{c_d}$ for every $x\in \widetilde U_d$, where $c_d$ is a positive constant. We need to extend each $v_i^d$ to a rugose vector field $v_i^{d+2}$ on a neighborhood of $U_{d+2}$ in $B_{d+2}$ such that $\|v_i^{d+2}(x)\|\leqslant\frac{\|x\|+1}{c_{d+2}}$ for every $x\in U_{d+2}$, where $c_{d+2}$ is also a positive constant (recall that the strata of $\S$ have even dimension). Note that to construct $v_i^{d+2}$, it is enough to construct $v_i^{d+2}$ separately on each stratum $X_\alpha$ with $\alpha\in I'_{d+2}\setminus I'_d.$ Without loss of generality, suppose that $I'_{d+2}\setminus I'_d=\{\alpha\}.$ 

By similar arguments as in \cite{Verdier1976}, for each $i=1,\dots,2m$, there is a rugose vector field on a neighborhood $\widetilde U_{d+2}$ of $U_{d+2}$ in $B_{d+2}=B_d\cup X_\alpha$, denoted by $w_i^{d+2}$, which extends $v_i^d$, such that:
\begin{itemize}
\item [(i)] The restriction $w_i^{d+2}|_{U_\alpha}$ is a smooth vector field.
\item [(ii)] For $x\in U_\alpha$, we have $d_xf\big(w_i^{d+2}(x)\big)=\partial_i.$
\end{itemize}
For each $x\in U_d$, let $X_\beta$ be the stratum containing $x$, and let $W_x$ be a neighborhood of $x$ given by Lemma \ref{Compare}. Since $w_i^{d+2}$ is continuous, by shrinking $W_x$ if necessary, we may assume that 
\begin{equation}\label{F1} \|w_i^{d+2}(y)\|<2\|v_i^{d}(x)\|,
\end{equation}
for any $y\in W_x$. Let $V_d:=\bigcup_{x\in U_d}W_x$, then $V_d$ is an open neighborhood of radius not bigger than $1$ of $U_d$. By a smooth version of Urysohn's lemma, there is a smooth function $\varphi:\R^{2n}\to[0,1]$ such that $\varphi^{-1}(0)=\R^{2n}\setminus V_d$ and $\varphi^{-1}(1)=U_d$. For $x\in \widetilde U_{d+2}$, set
$$v_i^{d+2}(x):=\left\{
\begin{array}{llll}
w_i^{d+2}(x)=v_i^d(x) & \text{ if } x\in \widetilde U_{d+2}\cap\widetilde U_d \\
\big(1-\varphi(x)\big)v_i^\alpha(x)+\varphi(x)w_i^{d+2}(x) & \text{ if } x\in \widetilde U_{d+2}\setminus\widetilde U_d.\\
% v_i^\alpha(x) & \text{ if } x\in X_\alpha\setminus V_d.
\end{array}\right.$$
Clearly, the restriction of $v_i^{d+2}$ on each stratum is a smooth vector field. Moreover for $x\in \widetilde U_{d+2}\setminus\widetilde U_d$, we have
$$\begin{array}{lllll}
d_xf\big(v_i^{d+2}(x)\big)&=d_xf\Big(\big(1-\varphi(x)\big)v_i^\alpha(x)+\varphi(x)w_i^{d+2}(x)\Big)\\
&=\big(1-\varphi(x)\big)d_xf\big(v_i^\alpha(x)\big)+\varphi(x)d_xf\big(w_i^{d+2}(x)\big)\\
&=\big(1-\varphi(x)\big)\partial_i+\varphi(x)\partial_i=\partial_i.
\end{array}$$
Let us prove that $v_i^{d+2}$ is a rugose vector field. For any $x\in \widetilde U_{d+2}\cap\widetilde U_{d}$, let $X_\beta$ be the stratum containing $x$.   
For $x'\in W_x\cap X_\beta$ and $y\in W_x\cap X_\alpha$ with $\beta\in I'_d$, we have
$$\begin{array}{lllll}
\|v_i^{d+2}(y)-v_i^{d+2}(x')\|&=\big\|\big(1-\varphi(y)\big)v_i^\alpha(y)+\varphi(y)w_i^{d+2}(y)-v_i^d(x')\big\|\\
&=\big\|\big(1-\varphi(y)\big)v_i^\alpha(y)-\big(1-\varphi(y)\big)w_i^{d+2}(y)+w_i^{d+2}(y)-v_i^d(x')\big\|\\
&\leqslant\big(1-\varphi(y)\big)\|v_i^\alpha(y)-w_i^{d+2}(y)\|+\|w_i^{d+2}(y)-v_i^d(x')\|\\
&\leqslant\big(1-\varphi(y)\big)(\|v_i^\alpha(y)\|+\|w_i^{d+2}(y)\|)+\|w_i^{d+2}(y)-v_i^d(x')\|.
\end{array}$$
We note the following facts:
\begin{itemize}
\item Since $1-\varphi(y)$ is a smooth function, it is locally Lipschitz; with no loss of generality, assume that $1-\varphi(y)$ is Lipschitz 
on $W_x$ with constant $c_1.$ Then $$1-\varphi(y)=\big(1-\varphi(y)\big)-\big(1-\varphi(x')\big)\leqslant c_1\|y-x'\|.$$
\item By Lemma \ref{Compare} and by the continuity of $w_i^{d+2}$, there is a positive constant $c_2$ depending only on $x$ such that $\|v_i^\alpha(y)\|+\|w_i^{d+2}(y)\|\leqslant c_2$\\ (we can take $c_2:=\max\big\{2\|v_i^\beta(x)\|,\sup_{y\in \overline W_x\cap X_\alpha}\|w_i^{d+2}(y)\|\big\}$).
\item Since $w_i^{d+2}$ is rugose, it follows that there is a positive constant $c_3$ depending only on $x$ such that $\|w_i^{d+2}(y)-v_i^d(x')\|\leqslant c_3\|y-x'\|.$
\end{itemize}
Hence 
$$\|v_i^{d+2}(y)-v_i^{d+2}(x')\|\leqslant (c_1c_2+c_3)\|y-x'\|,$$
i.e., $v_i^{d+2}$ is rugose. Now it remains to show that there is a positive constant $c_{d+2}$ such that $\|v_i^{d+2}(y)\|\leqslant\frac{\|y\|+1}{c_{d+2}}$ for every $y\in \widetilde U_{d+2}$. Obviously, the statement holds for $y\in(\widetilde U_{d+2}\cap\widetilde U_d)$ by the induction assumption and for $y\in(\widetilde U_{d+2}\setminus V)$ by Lemma \ref{Bounded}, so we can suppose that $y\in (V\cap \widetilde U_{d+2})\setminus \widetilde U_d$, which clearly implies that $y\in X_\alpha$. By construction and by Lemma \ref{Compare}, there are a point $x\in U_d$ and an open neighborhood $W_x$ of radius not greater than $1$ of $x$ containing $y$ such that $\|v_i^\alpha(y)\|<2\|v_i^\beta(x)\|$, where $\beta$ is the index of the stratum $X_\beta$ containing $x$. By Lemma \ref{Bounded}, it follows that
\begin{equation}\label{F2}\|v_i^\alpha(y)\|<2\frac{\|x\|+1}{c}\leqslant 2\frac{\|y\|+\|x-y\|+1}{c}\leqslant 2\frac{\|y\|+2}{c}\leqslant 4\frac{\|y\|+1}{c},\end{equation}
where $c$ is the constant in the same lemma. Similarly, in view of (\ref{F1}) and the induction assumption, we have 
\begin{equation}\label{F3}\|w_i^{d+2}(y)\|<2\|v_i^{d}(x)\|\leqslant 2\frac{\|x\|+1}{c_d}\leqslant 2\frac{\|y\|+\|x-y\|+1}{c_d}\leqslant 2\frac{\|y\|+2}{c_d}\leqslant 4\frac{\|y\|+1}{c_d}.\end{equation}
Thus (\ref{F2}) and (\ref{F3}) yield
$$\begin{array}{llll}
\|v_i^{d+2}(y)\| &=& \big|\big(1-\varphi(y)\big)v_i^\alpha(y)+\varphi(y)w_i^{d+2}(y)\big\|\\
&\leqslant& \big(1-\varphi(y)\big)\|v_i^\alpha(y)\|+\varphi(y)\|w_i^{d+2}(y)\|\\
&\leqslant& \big(1-\varphi(y)\big)4\frac{\|y\|+1}{c}+\varphi(y)4\frac{\|y\|+1}{c_d}\\
&<&\Big(\frac{4}{c}+\frac{4}{c_d}\Big)(\|y\|+1).
\end{array}$$
Set $c_{d+2}=\min\Big\{\frac{1}{\frac{4}{c}+\frac{4}{c_d}},c,c_d\Big\}$, then $\|v_i^{d+2}(y)\|\leqslant\frac{\|y\|+1}{c_{d+2}}$ for every $y\in \widetilde U_{d+2}$. By induction, there exists a rugose vector field on a neighborhood of $U_{2\dim_\C X}$ in $X$ with the  property of Lemma \ref{Integrable}. Then the theorem follows by applying Lemma \ref{Integrable}.
\end{proof}

The following corollary follows immediately from Theorem \ref{NonProperIsotopy}.
\begin{corollary}\label{NonProperIsotopySubmersion} Let $X\subset\C^n$ be an affine variety with an affine Whitney stratification $\S$ and let $f:X\to\C^m$ be a polynomial dominant map. Assume that for any stratum $X_\beta\in\S$, the restriction $f|_{X_\beta}$ is a submersion and 
 $K_\infty(f|_{X_\beta})=\emptyset$. Then $f$ is a locally trivial fibration.
\end{corollary}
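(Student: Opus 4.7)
The plan is to invoke Theorem \ref{NonProperIsotopy} after verifying that, under the hypotheses of the corollary, the set $K(f)$ of stratified generalized critical values is empty. Once this is established, the theorem's conclusion ``$f$ is locally trivial outside $K(f)$'' upgrades to ``$f$ is locally trivial at every point of $\C^m$,'' which is precisely the assertion that $f$ is a locally trivial fibration.

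Concretely, I would decompose $K(f) = \bigcup_{\alpha \in I}\bigl(K_0(f|_{X_\alpha}) \cup K_\infty(f|_{X_\alpha})\bigr)$ as in definition (\ref{Kf}). For each stratum $X_\alpha$, the hypothesis that $f|_{X_\alpha}$ is a submersion forces $C(f,X_\alpha) = \emptyset$, so by (\ref{K0f}) we obtain $K_0(f|_{X_\alpha}) = \overline{f(C(f,X_\alpha))} = \emptyset$. Combining this with the hypothesis $K_\infty(f|_{X_\alpha}) = \emptyset$, every term of the union is empty, hence $K(f) = \emptyset$.

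Applying Theorem \ref{NonProperIsotopy} with $K(f) = \emptyset$, one concludes that $f$ is locally trivial over every point of $\C^m \setminus \emptyset = \C^m$, i.e., $f$ is a locally trivial fibration. There is no substantive obstacle: the corollary is a direct specialization of the first isotopy lemma to the case where neither local nor asymptotic stratified critical behavior occurs, so all the real work has already been done in proving Theorem \ref{NonProperIsotopy} (in particular, the rugose vector field construction that underlies it).
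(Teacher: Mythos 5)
Your proof is correct and takes the same route as the paper: the paper itself states that the corollary ``follows immediately from Theorem~\ref{NonProperIsotopy},'' and you have simply spelled out why, namely that the submersion hypothesis gives $C(f,X_\alpha)=\emptyset$ hence $K_0(f|_{X_\alpha})=\emptyset$, which together with the assumption $K_\infty(f|_{X_\alpha})=\emptyset$ forces $K(f)=\emptyset$, so local triviality holds over all of $\C^m$.
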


%---------------------------------------------------------------------------------------------------------------------------------

\section {Computation of the sets of stratified generalized critical values}\label{StratifiedKf}

In this section, we will compute the set $K(f)$ of stratified generalized critical values  of $f$, for which we need to construct an affine Whitney stratification of $X$ and then apply \cite{Jelonek2005} for each stratum of this stratification. The process is a bit different from the construction in Subsection \ref{SectionWhitneyb} since we only need to construct  such an affine Whitney stratification ``partially", by remarking the following facts:
\begin{itemize}
\item As the construction of Whitney stratifications is by  induction on dimension, we only need to proceed until the dimension shrinks below $m$ since the restriction of $f$ to any stratum of dimension $<m$ is always singular. %In other words, we only need to find an $a_f$ stratification of $$
\item For any algebraic set $Z\subseteq X$, let 
$$\displaystyle r_Z:=\max_{x\in Z\setminus V(p_{Z,\emptyset})}\rank Jac_x(f|_Z)\ \text{ and }\ H(Z):=\overline{\{x\in Z\setminus V(p_{Z,\emptyset}):\rank Jac_x(f|_Z)<r_Z\}}^\Z.$$ 
Then at any step of the induction process, the construction in Subsection \ref{SectionWhitneyb} can be omitted if $r_Y<m.$
\end{itemize}
Let us now construct such a stratification. %For any algebraic set $V\subseteq X$, recall that $\displaystyle r_V:=\max_{x\in\reg(V)}\rank Jac_x(f|_V)$. Let $H(V,r):=\overline{\{x\in\reg(V):\rank Jac_x(f|_V)<r\}}$. 
With the same notations as in Lemma \ref{Criterionb2}, let 
$$\Gamma_3:=\bigcup_{k=1}^t\left\{\begin{matrix}
(x,x,w,v,\gamma,\lambda,\mu)\in\C^n\times\C^n\times\C^n\times\C^n\times\C\times\C\times\C^{t}:\\
h_1(x,x,w,v)=\dots=h_q(x,x,w,v)=0\\
\gamma\sum_{j=1}^nv_jw_j=1\\
\lambda p_{Y,\emptyset}(x)=1\\
\mu_k M^{(m,p)}_k(x)=1
\end{matrix}\right\},$$
where each $M^{(m,p)}_k(x)$ is a minor of the  matrix
$$A(x):=\begin{bmatrix} 
d_x f_1\\
\vdots\\
d_x f_m\\
d_x g_1\\
\vdots\\
d_x g_r\\
d_x\widetilde g_{r+1}\\
\vdots\\
d_x\widetilde g_p
\end{bmatrix},$$
obtained by deleting $n-m-p$ columns. So $\Gamma_3$ differs from $\Gamma_2$ in the last $t$ equations since we are only interested in finding the points where the Whitney condition (b) is not satisfied, outside $P(Y,\emptyset)$. Let $$\pi_3:\C^n\times\C^n\times\C^n\times\C^n\times\C\times\C\times\C^{t}\to \C^n$$ 
be the projection on the first $n$ coordinates. By Lemma \ref{Criterionb2}, $\pi_3(\Gamma_3)$ is the set of points where the Whitney condition (b) fails. Obviously $\pi_3(\Gamma_3)\subset\reg(f|_{Y\setminus P(Y)})$ and $\dim \pi_3(\Gamma_3)<\dim Y.$ Set $\widetilde W:=\overline{\pi_3(\Gamma_3)}^\Z$. Then obviously $\dim \widetilde W<\dim Y$. Again, we can compute a Gr\"obner basis of the ideal $I(\widetilde W).$

Finally, set 
\begin{itemize}
\item $X_0:=X,$ 
\item $X_1:=X_0\cap V(p_{X_0,\emptyset}),\ S_1=K_0(f|_{X_0\setminus X_1}), \dots,$
\item $X_i:=X_{i-1}\cap V(p_{X_{i-1},\bigcup_{j=0}^{i-2} \widetilde W(X_j,X_{i-1})}),\ S_i=K_0(f|_{X_{i-1}\setminus X_i}), \dots$ 

\end{itemize}
By induction, we can construct a finite filtration of algebraic sets $X=X_0\supset X_1\supset\dots\supset X_q\supset X_{q+1}\supseteq\emptyset$ with $\dim X_i>\dim X_{i+1}$ and $r_{X_{q+1}}<m$. It is clear that this filtration does not induce an affine Whitney stratification of $X$. However, it shows that there is an affine Whitney stratification $\S$ such that
$$\sing(f,\S)=\bigcup_{i=1}^{q} S_i\cup \overline{f(X_{q+1})}.$$
Let
$B_i:=X_i\setminus X_{i+1}.$ 
Then $\{B_i\}_{i=0,\dots,q}$ is an affine Whitney stratification of $X\setminus X_{q+1}.$ Every variety $B_i$ can be realized as a closed affine variety in $\C^{n+1}$, by the embedding $B_i\ni x\mapsto \big(x, 1/P_{X_i,\bigcup_{j=0}^{i-1}\widetilde W(X_j,X_{i})}(x)\big)\in \C^{n+1}$ for $i>0$ or the embedding $B_0\ni x\mapsto \big(x, 1/P_{X_0,\emptyset}(x)\big)\in \C^{n+1}$. Let $K_\infty(f|_{B_i})$ be the set of asymptotic critical values of $f|_{B_i}$, which now can be computed analogously as in \cite{Jelonek2005} - this will be done in the next section. Then from the construction, it is clear that the set of stratified generalized critical values of $f$ is given by
$$K(f):=\bigcup_{i=1}^{q} K_\infty(f|_{B_i})\cup \sing(f,\S),$$
and $K(f)$ can be computed effectively.
Note that Remark \ref{uwaga} and  elementary properties of Gr\"obner basis implies:

\begin{corollary}
Let $X\subset \C^n$ be an affine variety of pure dimension and let $f=(f_1,\dots, f_m) :X\to \C^m$ be a polynomial mapping. Let $\Bbb F\subset \C$ be a subfield generated by coefficients of generators of $I(X)$ and all coefficients of polynomials $f_i, \ i=1,\dots,m.$ Then there is a nowhere dense affine variety $K(f)\subset \C^m$, which is described by polynomials from $\Bbb F[x_1,\dots,x_m]$, such that all bifurcation values $B(f)$ of $f$ are contained in $K(f).$ In particular, for $m=1$, if $X$ and $f$ are described by polynomials from $\Bbb Q[x_1,\dots,x_n]$, then all bifurcation values of $f$ are algebraic numbers. 
\end{corollary}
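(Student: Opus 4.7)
The plan is to combine Theorem \ref{NonProperIsotopy} with a careful bookkeeping of the coefficient field throughout the explicit construction of $K(f)$ given in Section \ref{StratifiedKf}. By Theorem \ref{NonProperIsotopy} the bifurcation set $B(f)$ is contained in $K(f)$, and by the filtration construction preceding the corollary, $K(f)$ is realized as the finite union
$$K(f)=\bigcup_{i=1}^{q}K_\infty(f|_{B_i})\ \cup\ \bigcup_{i=1}^{q}S_i\ \cup\ \overline{f(X_{q+1})},$$
each piece being an affine subvariety of $\C^m$ of codimension at least one (the $K_\infty$-part by \cite[Theorem 3.3]{Jelonek2005}, the $K_0$-parts as closures of images of critical loci under polynomial maps, and $\overline{f(X_{q+1})}$ because $r_{X_{q+1}}<m$). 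So nowhere-denseness is automatic; what remains is to argue that the whole union is cut out by polynomials in $\Bbb F[x_1,\dots,x_m]$.

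The first step is to observe that $\Bbb F$ is necessarily infinite, since it is a subfield of the characteristic-zero field $\C$. Consequently, all the ``sufficiently generic'' random numbers $\alpha_{ij},\beta_{ij},\gamma_i$ appearing in Proposition \ref{Affine} may be chosen inside $\Bbb F$ (avoiding a proper Zariski-closed subset of $\Bbb F^N$); this is exactly the content of Remark \ref{uwaga}, and guarantees that every polynomial $p_{X_i,W}$ produced at every stage of the filtration lies in $\Bbb F[x_1,\dots,x_n]$. Next, each of the auxiliary varieties $\Gamma_1,\Gamma_2,\Gamma_3$ used in Lemmas \ref{NormalTangent}--\ref{Criterionb2} is defined by polynomials whose coefficients are built from entries of the $g_i$, from partial derivatives of the $f_j$ and $g_i$, and from the $p_{Y,\emptyset}$'s already seen to be in $\Bbb F$; consequently these varieties are defined over $\Bbb F$. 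Since Buchberger's algorithm, radical computation in characteristic zero, and elimination via Gröbner bases with an elimination monomial order all operate entirely within the coefficient field, the ideals $I(C(X,Y))$, $I(W(X,Y))$ and $I(\widetilde W(X,Y))$ are computed over $\Bbb F$ at every inductive step. Thus the filtration $X=X_0\supset X_1\supset\dots\supset X_{q+1}$ and the resulting stratification $\{B_i\}$ are defined over $\Bbb F$, and so are the critical sets $C(f,B_i)$ and their images $S_i=\overline{f(C(f,B_i))}$, computed by a further elimination.

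For the asymptotic piece $K_\infty(f|_{B_i})$, each $B_i$ is realized, as explained just after the filtration, as a closed affine subvariety of $\C^{n+1}$ defined over $\Bbb F$ via the embedding $x\mapsto\bigl(x,1/P_{X_i,\dots}(x)\bigr)$. The algorithm of \cite{Jelonek2005} for $K_\infty$ on a smooth affine variety uses exactly the same three ingredients (Gröbner bases, radicals and elimination applied to an ideal whose generators involve the $f_j$, the defining polynomials, and auxiliary slack variables), so $K_\infty(f|_{B_i})$ is again cut out by polynomials in $\Bbb F[x_1,\dots,x_m]$. Assembling all pieces gives $K(f)\subset\C^m$ as a nowhere dense affine variety defined over $\Bbb F$ with $B(f)\subset K(f)$, which is exactly the first assertion. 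For the special case $m=1$ with everything defined over $\Bbb Q$, the set $K(f)$ is a nowhere dense algebraic subset of $\C$, hence finite, hence the zero set of some non-zero polynomial in $\Bbb Q[x_1]$; all of its elements, and in particular all bifurcation values, are therefore algebraic numbers.

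The step I expect to be the main obstacle is the field-tracking of the $K_\infty$ computation: one must inspect the algorithm of \cite{Jelonek2005} and verify that every auxiliary ideal, slack variable and projection introduced there is performed over $\Bbb F$ and does not implicitly invoke, say, a generic linear change of coordinates requiring an algebraic extension. All the other steps (Proposition \ref{Affine}, the Whitney refinement, and the closure of $f(X_{q+1})$) reduce transparently to Gröbner-basis and radical computations over $\Bbb F$ and should cause no trouble.
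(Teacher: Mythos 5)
Your proposal is correct and follows essentially the same route as the paper: the corollary is stated after the explicit construction of $K(f)$ in Sections 4--5, and the paper's one-line justification (``Remark~2.2 and elementary properties of Gröbner bases'') is exactly what you unpack — choose the generic numbers in $\Bbb F$ (possible since $\Bbb F$ is infinite, being a characteristic-zero field), and observe that Gröbner bases, radicals and elimination never leave the ground field, so each piece of $K(f)$ is cut out over $\Bbb F$. Your treatment of the $m=1$ case is also the intended one.
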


\section{Computation of $K_0(f|_{B_i})\cup K_\infty(f|_{B_i})$}\label{B_i}

Let $k=\R$ or $k=\C.$
Let $X\cong k^n$, $Y\cong k^m$
be finite dimensional vector spaces (over $k$). We consider those
space equipped with
the canonical scalar (hermitian) products.
Let us denote by
${\mathcal L}(X, Y)$ the set of linear mappings from $X$ to $Y$ and by
$\Sigma =\Sigma(X,Y)\subset {\mathcal L}(X, Y)$ the set of non-surjective
mappings. In
this section we give several different expressions for a distance of an
$A\in {\mathcal L}(X, Y)$ to the space $\Sigma $   of singular operators.
Let us recall
the  first following (\cite{Rabier1997}):

\begin{definition}\label{odwrotne}
Let $A\in {\mathcal L}(X, Y)$. Set
$$\nu (A) = {\rm inf}_{ ||\phi||=1} ||A^* (\phi)||,$$
where $A^*: {\mathcal L}(Y^*, X^*)$ is adjoint operator and $\phi\in Y^*.$
\end{definition}

Let $\alpha, \beta :{\mathcal L}(X, Y) \to \R_+ $ be two  non-negative
functions. We shall
say that
$\alpha$ and $\beta$ are {\it equivalent} (we write $\alpha \sim
\beta$) if there are
constants
$c,d >0$ such that
$$
c\alpha(A) \le \beta (A) \le d \alpha (A)$$
for any $A\in {\mathcal L}(X, Y)$.
We shall give below several functions  equivalent  to $\nu$.
Let $A= (A_1,\dots, A_m)\in {\mathcal L}(X, Y)$ and let $\overline{A_i}= \grad \ A_i.$
Denote by $<(\overline{A_j})_{j\not= i}>$ the linear space generated by vectors
$(\overline{A_j}), \, j\not= i$.
Let
$$\kappa (A) = {\rm min}_{1\le i\le m} \dist(\overline{A_i},
      <(\overline{A_j})_{j\not= i}>),$$
be the {\it Kuo number of} $A$.

\begin{proposition}[\cite{kos}]\label{kuo} The Kuo function $\kappa$ is
equivalent to $\nu$
of Rabier. More precisely
$$\nu (A) \le \kappa (A)\le \sqrt{m} \nu (A).$$
\end{proposition}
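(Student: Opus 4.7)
The plan is to rewrite both quantities in terms of the gradient vectors $\overline{A_1},\dots,\overline{A_m}\in X$ and then bound them against each other by elementary Hilbert-space manipulations. The key identification is the following: if we write $\phi=(\phi_1,\dots,\phi_m)\in Y^*$ then $A^*(\phi)(x)=\sum_i \phi_i A_i(x)=\bigl\langle\sum_i \phi_i\overline{A_i},\,x\bigr\rangle$, so by Riesz representation $\|A^*(\phi)\|=\|\sum_i \phi_i\overline{A_i}\|$. Hence
\[
\nu(A)=\inf_{\|\phi\|=1}\Bigl\|\sum_{i=1}^m\phi_i\overline{A_i}\Bigr\|.
\]
This single reformulation is what makes everything else routine.

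For the lower bound $\nu(A)\le\kappa(A)$, I would fix an index $i$ and let $c_j\in k$ be the (unique) coefficients realizing $\dist(\overline{A_i},\langle(\overline{A_j})_{j\neq i}\rangle)$, so that $\bigl\|\overline{A_i}-\sum_{j\neq i}c_j\overline{A_j}\bigr\|$ equals this distance. Taking $\phi\in Y^*$ with $\phi_i=1$ and $\phi_j=-c_j$ for $j\neq i$, and normalizing to $\widetilde\phi=\phi/\|\phi\|$, one gets $\|A^*(\widetilde\phi)\|=\dist(\overline{A_i},\langle(\overline{A_j})_{j\neq i}\rangle)/\|\phi\|$. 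Since $\|\phi\|^2=1+\sum_{j\neq i}|c_j|^2\ge 1$, we deduce $\nu(A)\le\dist(\overline{A_i},\langle(\overline{A_j})_{j\neq i}\rangle)$, and taking the minimum over $i$ gives $\nu(A)\le\kappa(A)$.

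For the upper bound $\kappa(A)\le\sqrt{m}\,\nu(A)$, pick (or approximate by a minimizing sequence) $\phi$ with $\|\phi\|=1$ and $\|\sum_i\phi_i\overline{A_i}\|=\nu(A)$. Since $\sum_{i=1}^m|\phi_i|^2=1$, there is an index $i_0$ with $|\phi_{i_0}|\ge 1/\sqrt{m}$. Factoring out $\phi_{i_0}$,
\[
\Bigl\|\overline{A_{i_0}}-\sum_{j\neq i_0}\Bigl(-\tfrac{\phi_j}{\phi_{i_0}}\Bigr)\overline{A_j}\Bigr\|=\frac{1}{|\phi_{i_0}|}\Bigl\|\sum_i\phi_i\overline{A_i}\Bigr\|\le\sqrt{m}\,\nu(A).
\]
The left-hand side is an upper bound for $\dist(\overline{A_{i_0}},\langle(\overline{A_j})_{j\neq i_0}\rangle)$, which in turn is an upper bound for $\kappa(A)$ (since $\kappa$ is a minimum over $i$), finishing the proof.

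There is essentially no obstacle here: the whole argument is linear algebra once the Riesz identification is made. The only minor care required is that in the Hermitian case one must verify that $\|A^*(\phi)\|=\|\sum_i\phi_i\overline{A_i}\|$ comes out with the correct placement of conjugates given the convention $\langle v,w\rangle=\sum_i\overline{v_i}w_i$ used in the paper; with that convention the computation above is valid verbatim, so both inequalities hold over $\R$ and $\C$ alike.
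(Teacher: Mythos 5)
Your proof is correct. The paper simply cites this result from \cite{kos} without reproducing a proof, so there is no in-text argument to compare against; your Riesz-representation reformulation $\nu(A)=\inf_{\|\phi\|=1}\bigl\|\sum_i\phi_i\overline{A_i}\bigr\|$ followed by the two pigeonhole/projection estimates is the standard argument and it is carried out accurately, including the remark about conjugates being absorbed by the substitution $\phi\mapsto\overline\phi$ on the unit sphere.
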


%We say that $\nu (A)$ and $\kappa (A)$ are equivalent and write
%$\nu (A) \sim \kappa (A)$. The sign
%$X\sim Y$ means that there are
%positive constants $C_1, C_2$ such that $C_1 X\le Y\le C_2 X$.

\begin{definition}
Let $A\in {\mathcal L}(X, Y)$ and let $H\subset X$ be a linear subspace.
We set
$$\nu (A, H) = \nu(A|_H),\ \kappa(A, H) =\kappa(A|_H),$$
where $A|_H$ denotes the restriction of $A$ to $H$.
\end{definition}

From Proposition \ref{kuo} we get immediately the following corollary.

\begin{corollary}
%Let $A\in {\cal L}(X, Y)$ and let $H\subset X$ be a linear subspace. Then
We have $\nu (A, H) \sim \kappa(A, H).$
\end{corollary}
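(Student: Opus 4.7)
The plan is to derive the statement as a direct specialization of Proposition \ref{kuo} to the restricted operator $A|_H$. By the definitions introduced just above the corollary, $\nu(A,H) = \nu(A|_H)$ and $\kappa(A,H) = \kappa(A|_H)$, so it suffices to establish the equivalence $\nu(A|_H) \sim \kappa(A|_H)$ for the single linear map $A|_H : H \to Y$.

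First I would observe that $H$, being a linear subspace of $X$, inherits the canonical scalar (or Hermitian) product from $X$ and is therefore itself a finite-dimensional $k$-vector space equipped with an inner product. The restriction $A|_H$ is then an honest element of $\mathcal{L}(H,Y)$, and the hypotheses under which Proposition \ref{kuo} is stated are satisfied for $A|_H$ with the same target space $Y$. Applying that proposition directly to $A|_H$ yields
\[
\nu(A|_H) \;\le\; \kappa(A|_H) \;\le\; \sqrt{m}\,\nu(A|_H),
\]
where $m = \dim Y$, and translating back via the definitions of $\nu(A,H)$ and $\kappa(A,H)$ delivers the corollary with the same explicit constants.

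There is essentially no obstacle: the argument is a bookkeeping exercise that recognizes $A|_H$ as a legitimate input for Proposition \ref{kuo}. The only point worth checking is that the components $(A|_H)_i$ are literally the restrictions of the components $A_i$ of $A$, so that the gradient vectors $\overline{(A|_H)_i}$ appearing in the definition of $\kappa(A|_H)$ live inside $H$ rather than inside $X$; this is immediate from the definitions of $\kappa$ and of the restriction. Hence no separate estimate is needed and the corollary follows at once.
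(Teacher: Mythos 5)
Your proof is correct and matches the paper's approach exactly: the paper also derives the corollary by applying Proposition \ref{kuo} directly to the restriction $A|_H$, which is a legitimate element of $\mathcal{L}(H,Y)$ once one notes that $H$ inherits the inner product from $X$. The paper simply states "From Proposition \ref{kuo} we get immediately the following corollary," and your write-up supplies the routine bookkeeping behind that remark.
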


In fact we have also the following explicit expression for $\kappa(A, H)$ (see \cite{Jelonek2005}.

\begin{proposition}
Let $A= (A_1,\dots, A_m)\in {\mathcal L}(X, Y)$ and let $H\subset X$ be a linear
subspace. Assume that $H$ is given by a system of linear equations
$B_j=0, j=1,\dots,r$.
Then
$$\kappa (A, H) = {\rm min}_{1\le i\le m} \dist(\overline{A_i},
<(\overline{A_j})_{j\not= i};
(\overline{B_j})_{j=1,\dots,r}>),$$ where $\overline{A_i}=\grad \ A_i$
and $\overline{B_j}=\grad \
B_j.$
\end{proposition}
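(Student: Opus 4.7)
The plan is to reduce the identity to a short linear algebra computation based on orthogonal projection. The key observation is that restricting a linear functional to a subspace corresponds, on the level of gradients, to orthogonal projection.

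First, I would observe that for each $i$, one has $\overline{A_i|_H} = \proj_H(\overline{A_i})$. Indeed, for every $x \in H$,
\[
A_i(x) = \langle \overline{A_i}, x\rangle = \langle \proj_H(\overline{A_i}) + \proj_{H^\perp}(\overline{A_i}), x\rangle = \langle \proj_H(\overline{A_i}), x\rangle,
\]
since $\proj_{H^\perp}(\overline{A_i}) \perp x$. Next, setting $W := \langle \overline{B_1},\dots,\overline{B_r}\rangle$ and using that $H = \{x : B_j(x) = 0 \text{ for all } j\} = \{x : \langle \overline{B_j}, x\rangle = 0\}$, we get $H = W^\perp$ and therefore the orthogonal decomposition $X = H \oplus W$.

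Second, write $V_i := \langle \overline{A_j}\rangle_{j\neq i}$. The crux of the proof is the identity
\[
\dist\bigl(\overline{A_i},\, V_i + W\bigr) = \dist_H\bigl(\proj_H(\overline{A_i}),\, \proj_H(V_i)\bigr).
\]
To see it, for $v \in V_i$ and $w \in W$ decompose $v = v_H + v_W$ with respect to $X = H \oplus W$ and compute
\[
\bigl\|\overline{A_i} - v - w\bigr\|^2 = \bigl\|\proj_H(\overline{A_i}) - v_H\bigr\|^2 + \bigl\|\proj_W(\overline{A_i}) - v_W - w\bigr\|^2.
\]
For each fixed $v \in V_i$, the choice $w := \proj_W(\overline{A_i}) - v_W \in W$ kills the second summand. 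Hence the infimum over $(v,w)$ reduces to the infimum of $\|\proj_H(\overline{A_i}) - v_H\|^2$ over $v \in V_i$, which is exactly $\dist_H(\proj_H(\overline{A_i}), \proj_H(V_i))^2$ by linearity of the projection.

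Third, since $\proj_H$ is linear, $\proj_H(V_i) = \langle \proj_H(\overline{A_j})\rangle_{j\neq i} = \langle \overline{A_j|_H}\rangle_{j\neq i}$, by the first step. Combining everything and taking the minimum over $i$,
\[
\kappa(A,H) = \kappa(A|_H) = \min_{1\le i\le m} \dist_H\bigl(\overline{A_i|_H}, \langle \overline{A_j|_H}\rangle_{j\neq i}\bigr) = \min_{1\le i\le m} \dist\bigl(\overline{A_i}, V_i + W\bigr),
\]
which is the desired formula. The only step that requires any care is the orthogonal decomposition identity, and even there the argument is just expanding the squared norm and minimizing the $W$-component separately; so there is no serious obstacle, and the proof should fit in a few lines.
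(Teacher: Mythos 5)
Your proof is correct. Note that the paper itself does not prove this proposition; it simply refers the reader to \cite{Jelonek2005}, so there is no in-text argument here to compare against. That said, your argument is exactly the standard short derivation one would expect: you identify the gradient of $A_i|_H$ with $\proj_H(\overline{A_i})$, observe that $H=W^\perp$ where $W=\langle \overline{B_1},\dots,\overline{B_r}\rangle$, and then use the Pythagorean decomposition of $\|\overline{A_i}-v-w\|^2$ along $X=H\oplus W$ to show that minimizing over the $W$-component is free, reducing $\dist(\overline{A_i},V_i+W)$ to $\dist_H(\proj_H\overline{A_i},\proj_H V_i)$. Combined with the linearity of $\proj_H$ and the definition of $\kappa$, this yields the claimed identity. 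The only point worth stating explicitly, since the paper works over $\C$ with the Hermitian product, is that the identification $A_i(x)=\langle\overline{A_i},x\rangle$ and the orthogonal splitting $X=H\oplus W$ are both taken with respect to that Hermitian inner product; with that understood, every step goes through without change.
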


Finally we introduce we function $g'$  which will be useful in the explicit description of the set of generalized critical values.

\begin{definition}\label{gieprime} Let $A\in {\mathcal L}(k^n, k^m)$,
where $n\ge m+r$, and let $H\subset k^n$ be a linear
subspace given by a system of independent linear equation
$B_l=\sum b_{lk} x_{k}, \, l=1, \dots, r.$
By abuse of notation we denote by $A$   the matrix (in the canonical
bases in $k^n$ and
$k^m$) of the mapping $A$.
Let $ C=$ be a $(m+r)\times n $ matrix given by
rows $A_1,\dots,A_m; B_1,\dots,B_r$ (we identify $A_i=\sum a_{ik} x_k$
with the vector
$(a_{j1},\dots, a_{jn})$, similarly for $B_l$). Let $M_{I},$
where $I= (i_1,\dots,i_{m+r})$,
denote a $((m+r)\times (m+r))$ minor
of $C$ given by columns indexed by $I$.
Let $M_J(j)$ denote a $(m+r-1)\times (m+r-1)$ minor
given by columns indexed by $J$
and  by deleting the
$j^\text{th}$ row , where $1\le j\le m.$ Note that we delete only $A_j$ rows! We set
$$g'(A, H)=\max_I \Big\{\min_{\{ J\subset I, \ 1\le j\le m\}}
\frac{|M_I|}{|M_J(j)|}\Big\},$$
(where we consider only numbers with $M_J(j)\not=0$, if all numbers
$M_J(j)$ are zero, we put $g'(A)=0$).
\end{definition}

In particular we have the following (see \cite{Jelonek2005}).

\begin{proposition}\label{nuprime}
We have  $g'(A, H)\sim \nu (A, H).$
\end{proposition}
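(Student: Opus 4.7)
The plan is to prove $g'(A, H) \sim \nu(A, H)$ by reducing, via Proposition \ref{kuo}, to $g'(A, H) \sim \kappa(A, H)$, and then expressing both quantities in terms of the minors of the $(m+r) \times n$ matrix $C$ whose rows are $A_1, \ldots, A_m, B_1, \ldots, B_r$. The key tool is the classical Gram--determinant formula combined with the Cauchy--Binet identity: if $w_1, \ldots, w_p$ is a family of vectors and $W$ the matrix having them as rows, then $\det(W W^*) = \sum_{|I|=p}|\det W_I|^2$, where $W_I$ is the submatrix on the columns indexed by $I$.

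First I would rewrite $\kappa(A, H)$ in terms of minors. Using the classical Gram--determinant expression for the distance from a vector to a subspace together with Cauchy--Binet,
$$\dist\!\left(\overline{A_i},\ \langle (\overline{A_j})_{j \ne i},\, (\overline{B_l})_l \rangle\right)^2 \;=\; \frac{\sum_I |M_I|^2}{\sum_J |M_J(i)|^2},$$
where the numerator ranges over $(m+r)$-element column subsets $I$ and the denominator over $(m+r-1)$-element subsets $J$. Taking the minimum over $1 \le i \le m$ and noting that the number of terms in each sum is a constant depending only on $n, m, r$, one obtains $\kappa(A, H) \sim \kappa'(A, H) := \max_I |M_I| / \max_{j \le m,\ J}|M_J(j)|$.

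The lower bound $g'(A, H) \ge \kappa'(A, H)$ is immediate from the definition: restricting to $J \subset I$ only shrinks the set over which the maximum in the denominator is taken, so $\max_{J \subset I, j \le m}|M_J(j)| \le \max_{j \le m, J}|M_J(j)|$; maximising over $I$ gives the claim.

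For the upper bound $g'(A, H) \lesssim \kappa(A, H)$, I would apply the same Gram--Cauchy--Binet computation inside each coordinate subspace $k^I$ (with $|I|=m+r$) to show that $|M_I|/\max_{J \subset I,\ j \le m}|M_J(j)|$ is equivalent, up to dimensional constants, to $\min_{j \le m} \dist_{k^I}(\pi_I \overline{A_j},\ \pi_I W_j)$, where $\pi_I : k^n \to k^I$ is the coordinate projection and $W_j = \langle (\overline{A_l})_{l \ne j},\, (\overline{B_k})_k \rangle$. Because $\pi_I$ is $1$-Lipschitz, $\dist(\pi_I v, \pi_I W) \le \dist(v, W)$, and hence this quantity is bounded above by $\min_j \dist(\overline{A_j}, W_j) = \kappa(A, H)$. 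Maximising over $I$ yields $g'(A, H) \lesssim \kappa(A, H)$, which together with the preceding steps gives $g' \sim \kappa \sim \nu$. I do not anticipate a serious obstacle beyond careful bookkeeping of index sets to match the ``full'' and ``projected'' Gram--determinant computations and to identify the resulting numerators and denominators as the same $M_I$ and $M_J(j)$ appearing in the definition of $g'$.
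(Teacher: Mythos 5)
The paper does not reproduce a proof of Proposition \ref{nuprime}; it simply refers the reader to \cite{Jelonek2005}. Your argument is correct and, as far as I can tell, reconstructs the standard argument behind that reference: reduce to $\kappa$ via Proposition \ref{kuo}, then compare $g'$ with $\kappa$ by expressing the relevant distance to a subspace through the Gram-determinant quotient and expanding each Gram determinant via Cauchy--Binet. Both of your inequalities hold for the right reasons. For $g'(A,H)\geqslant\kappa'(A,H)$, rewriting $\min_{J\subset I,\,j}|M_I|/|M_J(j)|$ as $|M_I|/\max_{J\subset I,\,j}|M_J(j)|$ and observing that the restricted maximum in the denominator can only be smaller is exactly right; and since the number of index sets $I$ (resp.\ pairs $(j,J)$) is a constant depending only on $n,m,r$, replacing sums of squared minors by the largest squared minor yields $\kappa\sim\kappa'$ up to dimensional constants. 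For $g'\lesssim\kappa$, the crucial observation that for each $I$ the quantity $|M_I|/\max_{J\subset I,\,j}|M_J(j)|$ is, up to constants, the Kuo number of the projected configuration in $k^I$ (because $C_I$ is square, so the Cauchy--Binet sum in the numerator collapses to the single term $|M_I|^2$), combined with $\dist(\pi_I v,\pi_I W)\leqslant\dist(v,W)$, does the job. The degenerate cases (vanishing denominators) are handled by the convention $g'=0$ and agree with $\kappa=0$ precisely when $\rank C<m+r$, so the two-sided equivalence holds uniformly. In short, the proposal is correct and follows what appears to be the same Gram--Cauchy--Binet route as the cited source.
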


Now we can prove the following theorem.

\begin{theorem}\label{glowne}
Let $B_i$ be a strata of $X$ as in Section \ref{StratifiedKf}. Then the set $K(f|_{B_i})=K_0(f|_{B_i})\cup K_\infty(f|_{B_i})$ is a nowhere dense  algebraic subset 
of $\C^m.$ 
\end{theorem}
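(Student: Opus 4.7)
The plan is to reduce the claim to the smooth affine case of \cite{Jelonek2005}, applied stratum by stratum after a standard graph-type embedding. Recall from Section \ref{StratifiedKf} that each stratum $B_i = X_i \setminus X_{i+1}$ embeds as a closed smooth affine subvariety $\widetilde B_i \subset \C^{n+1}$ via $x \mapsto (x, 1/P(x))$, where $P$ is the polynomial associated to $X_i$ in the filtration; write $\widetilde f : \widetilde B_i \to \C^m$ for the induced polynomial map. Since $B_i \cong \widetilde B_i$ as algebraic varieties and $\widetilde f$ agrees with $f|_{B_i}$ under this isomorphism, it is enough to prove the claim for $\widetilde f$ on the smooth closed affine variety $\widetilde B_i$.

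For the classical part, I would first observe that the critical locus $C(\widetilde f) = \{x \in \widetilde B_i : \rank d_x \widetilde f < m\}$ is Zariski closed in $\widetilde B_i$, being cut out by the vanishing of suitable minors of the matrix obtained by stacking the Jacobian of $\widetilde f$ with the gradients of the defining equations of $\widetilde B_i$. Chevalley's theorem then renders its image under $\widetilde f$ constructible, so the Zariski closure $K_0(f|_{B_i})$ is an algebraic subset of $\C^m$. Since Sard's theorem ensures that this image has Lebesgue measure zero, the resulting algebraic set is necessarily nowhere dense.

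For the asymptotic part, I shall invoke Proposition \ref{nuprime} to replace the Rabier function $\nu$ by the explicit rational quantity $g'(d_x \widetilde f, T_x \widetilde B_i)$, assembled from polynomial minors $M_I$ and $M_J(j)$ of the enlarged Jacobian of Definition \ref{gieprime}. A value $y \in \C^m$ will then lie in $K_\infty(f|_{B_i})$ if and only if there is a sequence $x_n \in \widetilde B_i$ with $\|x_n\| \to \infty$, $\|x_n\|\, g'(d_{x_n}\widetilde f, T_{x_n}\widetilde B_i) \to 0$ and $\widetilde f(x_n) \to y$. The next step is to encode the conditions $\|x\|^2 \to \infty$ and $\|x\|^2\bigl(M_J(j)/M_I\bigr)^2 \to 0$ by auxiliary variables standing for $1/\|x\|^2$ and the minor ratios, obtaining a constructible subset $\Lambda$ of $\widetilde B_i \times \C^m \times \C^N$ whose image under projection onto the $\C^m$-factor equals $K_\infty(f|_{B_i})$. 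Its Zariski closure is then algebraic, and the nowhere density is already established in \cite[Theorem 3.3]{Jelonek2005}.

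The main obstacle I anticipate is this translation of a limit condition into an algebraic projection: one must verify that the equivalence $g' \sim \nu$ of Proposition \ref{nuprime} is strong enough to preserve the full limit set (rather than only individual quantitative estimates), and that the auxiliary variables encoding $1/\|x\|^2$ and $M_J(j)/M_I$ are arranged so that sending them to zero captures exactly the asymptotic regime. This is the same issue resolved in \cite[Theorem 3.6]{Jelonek2005}, and the argument should adapt verbatim to $\widetilde B_i$ because $g'$ and all minor computations depend only on polynomial data of the ambient affine embedding. Combining the classical and asymptotic parts then yields $K(f|_{B_i})$ as a nowhere dense algebraic subset of $\C^m$.
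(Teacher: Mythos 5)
Your high-level plan matches the paper's: pass to the closed affine embedding $\widetilde B_i\subset\C^{n+1}$, handle $K_0$ by Chevalley and Sard, and handle $K_\infty$ by replacing $\nu$ with the rational function $g'$ via Proposition \ref{nuprime} and then closing up an image. However, the step you flag yourself as the ``main obstacle'' is exactly where your encoding breaks down, and in a way that the allusion to \cite[Theorem 3.6]{Jelonek2005} does not automatically repair.

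Two concrete problems. First, $1/\|x\|^2$ and $\|x\|^2\bigl(M_J(j)/M_I\bigr)^2$ are not regular (or even rational) functions on $\widetilde B_i$ in the complex algebraic category: $\|x\|^2=\sum x_i\overline{x_i}$ involves conjugates, while the conjugate-free substitute $\sum x_i^2$ does not tend to infinity along all sequences with $\|x\|\to\infty$ (e.g.\ $x=(t,it,0,\dots)$). Consequently the set $\Lambda$ you describe is not a constructible subset of a complex affine variety, and the Gr\"obner-basis/elimination machinery you want to apply does not apply to it. The paper sidesteps this by encoding ``$x\to\infty$'' coordinatewise: for each $q$ it appends the coordinate $1/x_q$ (a genuine rational function on $B_i\setminus\{x_q=0\}$), defines $\Gamma((k,j),q)$ as the Zariski closure of the image of the resulting rational map $\Phi_{(k,j),q}$, and takes the finite union over $q$. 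Similarly, the condition $\|x\|\,g'\to 0$ is captured by appending the finitely many rational coordinates $W_{I_i(k_i,j_i)}=M_{I_i}/M_{I_i(k_i,j_i)}$ and all products $x_\ell W_{I_i(k_i,j_i)}$ (and intersecting the closure with the slice where these vanish), not by a single scalar $\|x\|^2(\cdot)^2$. Incidentally, the ratio you wrote is inverted: $g'$ involves $|M_I|/|M_J(j)|$, so the quantity that must tend to $0$ is $M_I/M_J(j)$, not its reciprocal.

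Second, your logical formulation ``a constructible $\Lambda$ whose projection to $\C^m$ equals $K_\infty$; its Zariski closure is then algebraic'' proves only that $\overline{K_\infty}$ is algebraic, not $K_\infty$ itself. Since membership in $K_\infty$ is a limit condition, the correct statement (Lemma \ref{FormulaKf} in the paper) is that $K_\infty(f)$ \emph{equals} the intersection of the Zariski closures $\Gamma((k,j),q)$ with the linear slice $\C^m\times\{0\}$; both inclusions must be verified, and the nontrivial direction uses that a point of the closure of the image of a dominant rational map is a limit of a sequence of actual image points. That identity, plus \cite[Theorem 3.3]{Jelonek2005} to rule out $K_\infty=\C^m$, is what makes $K_\infty$ itself algebraic and nowhere dense. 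So your outline has the right scaffolding, but the specific algebraic encoding of the asymptotic condition needs to be replaced by the coordinatewise one used in the paper.
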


\begin{proof}
It is standard fact that $K_0(f|_{B_i})$ is algebraic and nowhere dense (for details see the end of  subsection \ref{subsection}).
Hence it is enough to focus on $K_\infty(f|_{B_i}).$

By  construction the set $X:=B_i\subset \C^{n+1}$ 
is a complete intersection. 
Let us recall notation of Definition \ref{gieprime}. For $x \in \C$
let $A=d_x f$, and
$B_l = d_x b_l $, $l=1,\dots , r$.
Let $A\in {\mathcal L}(k^n, k^m)$, where $n\ge m+r$,
and let $T_xX =H\subset k^n$ be a linear subspace given by a system
of independent linear equation
$B_l=\sum b_{lk} x_{k}, \, l=1, \dots, r.$
By abuse of notation we denote by $A$   the matrix (in the canonical
bases in $k^n$ and
$k^m$) of the mapping $A$.
Let $ C=$ be a $(m+r)\times n $ matrix given by
rows $A_1,\dots,A_m; B_1,\dots,B_r$ (we identify $A_i=\sum a_{ik} x_k$
with the vector
$(a_{j1},\dots, a_{jn})$, similarly for $B_l$).

For an index $I=(i_1,\dots,i_{m+r})\subset \{1,\dots, n\}$
      let $M_{I}(x)$
denote the $((m+r)\times (m+r))$ minor
of $C$ given by columns indexed by $I$.
      For integers
$j\in I, 1\le k\le m$ we denote by
      $M_{I(k,j)} (x)$ the
      $(m+r-1)\times (m+r-1)$ minor obtained by deleting $j^\text{th}$ column and
$k^\text{th}$ row.
Note that we delete only
$A_k$, $1\le k\le m$ rows!

Hence $M_{I}$ and $M_{I(k,j)}$ are regular (restriction of
polynomials) functions on $X$.
We define now a family of  rational functions on $X$:
$$
W_{I(k,j)}(x)=M_I(x)/M_{I(k,j)}(x)$$
where   for
$M_{I(k,j)}\equiv 0$,  we put $W_{I(k,j)}\equiv0$.
We write  $b =(b_1,\dots,b_r)$ and $(f,b):\C^{n+1} \to \C^m\times \C^r$,
here we consider
$f_1,\dots,f_m,$ and $b_1,\dots ,b_r$ as polynomials on $\C^{n+1}$ (note that these polynomials does not depend on variable $x_{n+1}$).

Let $s=  \binom n  {m+r}$
and let $M_{I_1},\dots,M_{I_s}$ be all possible main minors of a matrix of
$d_x ( f,b).$ For every index $I_l$ take a pair $(k_l,j_l)$ which determine a
$(m+r-1)\times(m+r-1)$ minor of $M_{I_l}$ (we consider here only mniors which are not identically zero). We denote a  sequence
$(k_1,j_1),\dots,
(k_s,j_s)$ by $(k,j) \in \N^s \times \N^s$ and
we consider a rational function:
$$\Phi_{(k,j)}= \Phi((k_1,j_1),\dots,(k_s,j_s))
      : X
\to \C^m\times\C^N
$$
where the first component of $\Phi_{(k,j)}$ is $f$ and next components are
$W_{I_i(k_i,j_i)}$, $i=1,\dots, s$ and all products
$x_lW_{I_i(k_i,j_i)}$, $i=1,\dots, s$; $l=1,\dots, n$.

We can assume that for some choice of $l$ we have
$W_{I_l(k_l,j_l)}\not\equiv 0,$
and consequently dim $cl(\Phi_{(k,j)}(X))={\rm dim }\ X =\ n-r.$ Here
$cl(Y)$ stands for
the closure
of $Y$ in the strong (or which is the same, in the Zariski topology). Let
$\Gamma(k,j)= cl( \Phi_{(k,j)}(X))$ (by  $\Phi_{(k,j)}(X)$ we mean the set $\Phi_{(k,j)}(X\setminus P)$, where $P$ is a set of poles of $\Phi_{(k,j)}$).

Now for a given $q\in \{1,\dots,n\}$, consider the set $B_{i,q}:=B_i\setminus \{ x_q=0\}$ and the embedding $\iota: B_{i,q}\ni x\mapsto (x, 1/x_{q})\in \C^{n+2}.$ Finally let $\Phi_{(k,j),q}(x, x_{n+2}):=(\Phi_{(k,j)}(x),x_{n+2})$ and $\Gamma((k,j),q):=cl(\Phi_{(k,j),q}(X)).$

Let us recall that $y \in K_\infty (f)$ if  there exists
      a  sequence $ x\rightarrow\infty$  such that
$$ f(x)\rightarrow y \, \,  \text{and} \ \| x\| g'(x) \rightarrow 0,$$
were $g'(x) =g' (d_x f,T_x X)$
We have
\begin{lemma}\label{FormulaKf}
$$ K_\infty (f)
= \C^m \cap \bigcup_{(k,j),q}
\Gamma((k,j),q),$$ where we identify
$\C^m$ with $\C^m\times (0,\dots,0).$
\end{lemma}
\begin{proof}

Let
$y\in K_\infty(f).$ Hence  there is a
sequence $x^l\to \infty$, such that $f(x^l)\to y$ and $\| x^l\| g'(x_l) \rightarrow 0$. 
Moreover, if $x=(x_1,\dots,x_n)$, then there is at least one $q; 1\le q\le n$ such that $x^l_q\to \infty.$
If $\{x_l,\ l=1,2,\dots\} \subset C(f)$ ($C(f)$ denotes the set of critical points of $f$), then it is easy to see that $y\in \C^m \cap 
\Gamma((k,j),q)$ for every $(k,j).$ Consequently we can assume that $\{x_l,\ \ l=1,2,\dots\}\cap C(f)=\emptyset.$

Thus there is a
sequence $x^l\to \infty$, such that for every $I_i$ there are
integers $(k_i,j_i),$
such that $\| x^l\| M_{I_i}/M_{I_r(k_i,j_i)}(x_l)\to 0$ and $f(x^l)\to y$.
This also gives
$y\in \Gamma((k,j),q) \cap \C^m$ with $((k,j),q) = ((k_1,j_1),\dots,(k_s,j_s),q)$.

Conversely, if
$y\in \Gamma((k,j),q)\cap \C^m,$
then we can choose a sequence
$x^l\to \infty$, such that
$f(x^l)\to y$ and
$\|x^l\|M_{I_r}/M_{I_r(k_r,j_r)}(x^l)\to 0.$
It is easy to observe  that this implies
$\|x^l\|g'(x^l)\to 0$ and
$f(x^l)\to y$, i.e. $y\in K_\infty (f).$
\end{proof}
Now in light of \cite[Theorem 3.3]{Jelonek2005}, we have that $K_\infty(f)\not=\C^m$ hence $\C^m \cap \bigcup_{((k,j),q)} \Gamma((k,j),q)\not=\C^m.$ By Lemma \ref{FormulaKf}, $K_\infty(f)$ is an algebraic set. The theorem follows.
\end{proof}

\subsection{ A sketch of an algorithm}\label{subsection}

Let $X:=B_i\subset \C^{n+1}$ be a smooth affine variety of dimension $n-r$ and let
$I(X)=\{ b_1,\ldots, b_w\}.$
Let $f=(f_1,\dots, f_m): X \to \C^m$ be a polynomial dominant mapping.
Then the set $K_\infty(f)$ can be computed as follows.
%\end{theo}

By  construction $B_i$ is the subset of complete intersection, hence we 
 can choose polynomials $b_1,\dots,b_r\in I(X)$ such that
rank $\{ \grad \ b_1,\dots, \grad \ b_r\}=r$ on 
$X$.      Let us consider
the rational mapping:
$$\Phi((k_1,j_1),\dots,(k_s,j_s),q) :X \ni x \mapsto
(f(x), W_{I_1(k_1,j_1)}(x),x_1W_{I_1(k_1,j_1)}(x)
,\dots, x_nW_{I_1(k_1,j_1)}(x),$$
$$\dots, W_{I_s(k_s,j_s)}(x), x_1W_{I_s(k_s,j_s)}(x)
      ,\dots, x_n W_{I_s(k_s,j_s)}(x), 1/x_{q})
\in \C^m\times\C^{N},$$
which are constructed exactly as in the proof of
Theorem \ref{glowne}. Recall that
$$\Gamma((k_1,j_1),\dots,(k_s,j_s),q) =
cl(\Phi((k_1,j_1),\dots,(k_s,j_s),q) (X)).$$
We know also that
$$K_\infty (f)= L\cap \left( \bigcup_{((k_1,j_1),\dots,(k_s,j_s)),q}
\Gamma((k_1,j_1),\dots,(k_s,j_s),q)\right),$$
where $L=\C^m\times (0,\dots,0).$
First we  compute the ideal of the set $\Gamma((k_1,j_1),\dots,(k_s,j_s),q).$

To  this  end we   restrict the mapping $\Phi((k,j),q)$  to an open dense
subset $U$ on which this mapping  is regular.
In particular we can choose the set $U=X\setminus(\bigcup^s_{r=1}
\{M_{I_r(k_r,j_r)}=0\} \cup \{x_q=0\}).$
The set $U$ can be identified with the set $$V((k_1,j_1),\dots,(k_s,j_s),q)
:= $$
$$
:=
\{ (x,t,z_1,\dots,z_s)\in \C^{n+1}\times \C\times \C^s : b_j=0, \\j=1,\dots, w;\\ x_qt=1; \\
M_{I_r(k_r,j_r)}z_r=1; \ \
r=1,\dots,s\}.$$
Now we can  consider a morphism
\noindent
$$\Psi((k_1,j_1),\dots,(k_s,j_s)) :V((k_1,j_1),\dots,(k_s,j_s),q) \to
  \C^m\times\C^{N}.$$
defined by $$ (x,z) \to
(f(x), z_1M_{I_1}(x),x_1z_1M_{I_1}(x), \dots
, x_nz_1M_{I_1}(x),$$
$$\linebreak
\dots , z_sM_{I_s}(x), x_1z_sM_{I_s}(x)
      ,\dots, x_n z_s M_{I_s}(x),t).$$

\noindent Denote $\Psi((k_1,j_1),\dots,(k_s,j_s),q):=
(\psi_1(x,z),\dots, \psi_{m+N}(x,z)).$
It is easy to see that
$$\Gamma((k_1,j_1),\dots,(k_s,j_s),q)$$
is the closure of $$
\Psi((k_1,j_1),\dots,(k_s,j_s),q) (V((k_1,j_1),\dots,(k_s,j_s)),q).$$
Let $G((k_1,j_1),\dots,(k_s,j_s),q)=\graph(\Psi((k_1,j_1),\dots,(k_s,j_s),q))$.
A basis of the ideal $I$
of the set $G((k_1,j_1),\dots,(k_s,j_s),q)$ in the ring $\C[x_1,\dots,x_n,t,z_1,\dots,z_s;y_1,\dots,y_{m+N}]$ is given
by  the polynomials $$\{b_j, \\j=1,\dots, w;\} \cup \{z_rM_{I_r(k_r,j_r)}(x)-1,\\ r=1,\dots,s\}\cup \{tx_q-1\}\cup
\{ y_i-\psi_i(x,z), \\ i=1,\dots,m+N \}.$$
To compute a basis
${\mathcal B}((k_1,j_1),\dots,(k_s,j_s),q)$
of the ideal of the set $cl(\Gamma((k_1,j_1),\dots,(k_s,j_s),q),$ it
is enough to compute a Gr\"obner basis ${\mathcal
A}((k_1,j_1),\dots,(k_s,j_s))$ of the ideal $I$ in $\C[x,t,z,y]$ with respect to the lexicographic order in which $y<x,t,z$  (see e.g. \cite{Pauer1988}) and then to take
$${\mathcal B}((k_1,j_1),\dots,(k_s,j_s),q) =
{\mathcal A}((k_1,j_1),\dots,(k_s,j_s),q) \cap  \C [y_1,\dots,y_{m+N}].$$
Consequently,
$$K_\infty(f)= \bigcup_{((k_1,j_1),\dots,(k_s,j_s)),q}\{ y\in\C^m : \
h(y,0,\dots,0)=0\ \text{for  every} \  h\in
{\mathcal B}((k_1,j_1),\dots,(k_s,j_s),q) \}.$$

\vspace{3mm}
The computation of the set $K_0(f|_{B_i})$ is standard. 
Consider the set $$U:=
\{ x\in \C^{n+1} : b_j=0, \\j=1,\dots, w; \\
M_{I_r}=0; \ \
r=1,\dots,s\}.$$
Now we can  consider a morphism $f:U\to\C^m.$ We have $K_0(f)=\overline{f(U)}.$
Let $\Gamma$ be a graph of $f|_U$ and $I=I(\Gamma).$

A basis of the ideal $I$
 is given
by  the polynomials $$\{b_j; \\j=1,\dots, w;\} \cup \{M_{I_r}(x);\\ r=1,\dots,s\}\cup
\{ y_i-f_i; \\ i=1,\dots,m\}.$$
To compute a basis
${\mathcal B}$
of the ideal $I$ it
is enough to compute a Gr\"obner basis ${\mathcal
A}$ of the ideal
$I$
in $\C [x_1,\dots,x_{n+1};y_1,\dots,y_{m}]$ and then to take
$${\mathcal B} =
{\mathcal A} \cap  \C [y_1,\dots,y_{m}].$$
Consequently,
$K_0(f|_{B_i})= \bigcup \{ y\in\C^m : \ h(y,0,\dots,0)=0\ \text{for every} \  h\in {\mathcal B} \}.$

\vspace{5mm}

\noindent\textbf{Acknowledgment.} We would like to thank Nguyen Xuan Viet Nhan and Nguyen Hong Duc for some helpful discussions during the preparation the paper.

\end{document}